\theoremstyle{definition}
\newtheorem{definition}{Definition}[section]
\newtheorem{ex}[definition]{Example}
\newtheorem{rem}[definition]{Remark}
\theoremstyle{plain}
\newtheorem{prop}[definition]{Proposition}
\newtheorem{lem}[definition]{Lemma}
\newtheorem{coro}[definition]{Corollary}
\newtheorem{teo}[definition]{Theorem}
\newfont{\bbb}{msbm10 scaled\magstephalf}     
\def\reg{\operatorname{reg}}
\def\sing{\operatorname{sing}}
\def\Hess{\operatorname{Hess}}
\title[Relating geometry with projections and normal sections]{Relating second order geometry of manifolds through projections and normal sections}
\author{P. Benedini Riul, R. Oset Sinha}
\date{}
\address{Departamento de Matem\'{a}tica, Universidade Federal de S\~{a}o Carlos, Caixa Postal 676, S\~{a}o Carlos, SP 13560-905, Brazil}
\email{pedro.benedini.riul@gmail.com}
\address{Departament de Matem\`{a}tiques,
Universitat de Val\`encia, Campus de Burjassot, 46100 Burjassot,
Spain}
\email{raul.oset@uv.es}
\thanks{Work of P. Benedini Riul supported by FAPESP Grant 2019/00194-6}
\thanks{Work of R. Oset Sinha partially supported by MICINN Grant PGC2018-094889-B-I00}
\subjclass[2000]{Primary 57R45; Secondary 53A05, 58K05} \keywords{projections, normal sections, curvature locus, immersed surfaces, immersed 3-manifolds, singular corank $1$ manifolds}
\begin{document}

\begin{abstract}
We use normal sections to relate the curvature locus of regular (resp. singular corank 1) 3-manifolds in $\mathbb{R}^6$ (resp. $\mathbb R^5$) with regular (resp. singular corank 1) surfaces in $\mathbb R^5$ (resp. $\mathbb R^4$). For example we show how to generate a Roman surface by a family of ellipses different to Steiner's way. Furthermore, we give necessary conditions for the 2-jet of the parametrisation of a singular 3-manifold to be in a certain orbit in terms of the topological types of the curvature loci of the singular surfaces obtained as normal sections.  We also study the relations between the regular and singular cases through projections. We show there is a commutative diagram of projections and normal sections which relates the curvature loci of the different types of manifolds, and therefore, that the second order geometry of all of them is related. In particular we define asymptotic directions for singular corank 1 3-manifolds in $\mathbb R^5$ and relate them to asymptotic directions of regular 3-manifolds in $\mathbb R^6$ and singular corank 1 surfaces in $\mathbb R^4$.
\end{abstract}

\maketitle

\section{Introduction}

The study of second order geometry of manifolds in Euclidean spaces dates as far back as Gauss. By second order geometry we refer to any geometrical aspects which can be captured by the second fundamental form, or, in modern terminology, by the 2-jet of a parametrisation of the manifold. Concepts such as elliptic/parabolic/hyperbolic points, normal curvature, asymptotic directions and some aspects of the contacts with hyperplanes and spheres are included in the study of second order geometry.

In his seminal paper \cite{Little}, Little studied second order geometry of immersed manifolds in Euclidean spaces of dimensions greater than 3, in particular special attention was given to immersed surfaces in $\mathbb{R}^{4}$. He defined the second fundamental form and the curvature locus, which is an ellipse in this case. The curvature locus is the image in the normal space by the second fundamental form of the unitary tangent vectors. It can also be seen as the curvature vectors of normal hyperplane sections of the surface. The curvature locus is not an affine invariant but its topological type and its position with respect to the origin is an affine invariant. Besides, all the second order geometry is captured by this object.

The introduction of Singularity Theory techniques to study the differential geometry of manifolds in Euclidean spaces has given a great impulse to this subject in the last 20 years. There are many papers devoted to regular surfaces in $\mathbb R^4$ such as  \cite{BruceNogueira,BruceTari,GarciaMochidaFusterRuas,MochidaFusterRuas,MochidaFusterRuas2,BallesterosTari,OsetSinhaTari,RomeroFuster}, amongst others. For surfaces in $\mathbb R^5$ \cite{moraes09,Moraes02,Fuster/Ruas/Tari} are good examples. In fact, there is a recent book which covers these topics (\cite{Livro}). The study of regular 3-manifolds in $\mathbb R^6$ is also very recent. Here the curvature locus is a Veronese surface with many different topological types (see \cite{Carmen3var,Carmen3var2}).

The interest however, both for singularists and differential geometers has turned to the study of singular manifolds (\cite{annals}). For singular corank $1$ surfaces in $\mathbb{R}^{n}$, $n=3,4$ we can cite \cite{BenediniOset,BenediniOset2,Benedini/Sinha/Ruas,MartinsBallesteros}, and for singular corank $1$ 3-manifolds in $\mathbb R^5$, \cite{BenediniRuasSacramento}. Here the curvature locus is a parabola or a parabolic version of a Veronese surfaces. Curvature loci in general have been studied in \cite{nunoromerobringas}, for example.

The aim of this paper is to relate the geometry of all these objects which have traditionally been studied separately. There is a natural relation between regular and singular objects. When projecting a regular $n$-manifold in $\mathbb R^k$ along a tangent direction you obtain a singular $n$-manifold in $\mathbb R^{k-1}$. On the other hand, taking normal hyperplane sections of the $n$-manifold gives a family of $(n-1)$-manifolds in one dimension less. In Section \ref{diagram} we establish a commutative diagram using projections and normal sections which induces a commutative diagram amongst the curvature loci with immersions and blow-ups. As a result of this we prove that the second order geometry of all these objects is related. This justifies known relations for projections when $n=2$ and $k=4$, for example, and motivates to look for further relations between the geometries of different manifolds, both regular and singular, in different Euclidean spaces.

Section \ref{prelim} is devoted to preliminary results on the geometry of all the different objects appearing throughout the paper. In Section \ref{sections} we study normal sections of 3-manifolds both for the regular and singular cases and show that the curvature locus of a 3-manifold can be generated by the curvature loci of the surfaces obtained by normal sections. In particular we show how a Roman Steiner surface or a cross-cap surface can be generated by ellipses. Using these sections we can recover some geometry of the 3-manifold by the topological types of the curvature loci of the sections.

In Section \ref{asymptotic}, inspired by the commutative diagram of Section \ref{diagram}, we define asymptotic directions for singular 3-manifolds in $\mathbb R^5$ and relate them to asymptotic directions of regular 3-manifolds in $\mathbb R^6$ and singular surfaces in $\mathbb R^4$. We prove that the direction of projection is asymptotic if and only if the singularity of the singular projection is not in the best $\mathcal A^2$-orbit. We then explain how this direction of projection becomes a null tangent direction in the singular projection and so justify the existence of infinite asymptotic directions in the singular case, which was not fully understood until now.

Aknowledgements: the authors would like to thank M. A. S. Ruas for useful conversations and constant encouragement.


\section{The geometry of regular and singular surfaces and 3-manifolds in Euclidean spaces}\label{prelim}

\subsection{Regular surfaces in Euclidean spaces}

Given a smooth surface $M^{2}_{\reg}\subset\mathbb{R}^{2+k}$, $k\geq2$ and $f:U\rightarrow\mathbb{R}^{2+k}$
a local parametrisation of $S$ with $U\subset\mathbb{R}^{2}$ an open subset, let
$\{e_{1},\ldots,e_{2+k}\}$ be an orthonormal frame of $\mathbb{R}^{2+k}$ such that at any $u\in U$,
$\{e_{1}(u),e_{2}(u)\}$ is a basis for $T_{p}M^{2}_{\reg}$ and $\{e_{3}(u),\ldots,e_{2+k}(u)\}$ is a basis for
$N_{p}M^{2}_{\reg}$ at $p=f(u)$.

The second fundamental form of $M^{2}_{\reg}$ at a point $p$ is defined by
$II_{p}:T_{p}M^{2}_{\reg}\times T_pM^{2}_{\reg}\rightarrow N_{p}M^{2}_{\reg}$ given by
$$
\begin{array}{cc}
     II_p(e_1(u),e_1(u))=\pi_2(f_{xx}(u)),\  II_p(e_1(u),e_2(u))=\pi_2(f_{xy}(u)) \\
     II_p(e_2(u),e_2(u))=\pi_2(f_{yy}(u)),
\end{array}
$$
in the basis $\{e_{1}(u),e_{2}(u)\}$ of $T_pM^{2}_{\reg}$, where $\pi_2:T_p\mathbb{R}^{2+k}\rightarrow N_pM^{2}_{\reg}$
is the canonical projection on the normal space.
We extend $II_p$ to the hole space in a unique way as a symmetric bilinear map.

Taking $w=w_{1}e_{1}+w_{2}e_{2}\in T_pM^{2}_{\reg}$, we can write the quadratic form
$$II_{p}(w,w)=\sum_{i=1}^{k}(l_{i}w_{1}^{2}+2m_{i}w_{1}w_{2}+n_{i}w_{2}^{2})e_{2+i},$$
where $l_{i}=\langle f_{xx},e_{2+i}\rangle,\ m_{i}=\langle f_{xy},e_{2+i}\rangle$
and $n_{i}=\langle f_{yy},e_{2+i}\rangle$, for $i=1,\ldots,k$, are
called the coefficients of the second fundamental form with respect to the
frame above. The matrix of the second fundamental
form with respect to the orthonormal frame above is given by
$$
\left(
         \begin{array}{ccc}
           l_{1} & m_{1} & n_{1} \\
           \vdots & \vdots & \vdots \\
           l_k & m_k & n_k
         \end{array}
       \right).
$$

Consider a point $p\in M^{2}_{\reg}$ and the unit circle $\mathbb{S}^{1}$ in $T_{p}M^{2}_{\reg}$ parametrised by
$\theta\in [0,2\pi]$. The curvature vectors $\eta(\theta)$ of the normal sections of
$M^{2}_{\reg}$ by the hyperplane $\langle\theta\rangle\oplus N_{p}M^{2}_{\reg}$ form an ellipse in the
normal space $N_{p}M^{2}_{\reg}$, called the \emph{curvature ellipse} of $M^{2}_{\reg}$ at $p$, denoted by $\Delta_e$, which is the same as the image of the map
$\eta:\mathbb{S}^{1}\subset T_{p}M^{2}_{\reg}\rightarrow N_{p}M^{2}_{\reg}$, where
\begin{equation}
\eta(\theta)=\sum_{i=1}^{k}(l_{i}\cos(\theta)^2+2m_{i}\cos(\theta)\sin(\theta)+n_{i}\sin(\theta)^2)e_{2+i}.
\end{equation}\label{ellipse}
Moreover, if we write $u=\cos(\theta) e_{1}+\sin(\theta) e_{2}\in \mathbb{S}^{1}$,
$II_{p}(u,u)=\eta(\theta)$.

\subsection{Second order geometry of $3$-manifolds in $\mathbb{R}^n$}

Let $M^{3}_{\reg}$ be a $3$-manifold in $\mathbb{R}^{3+k}$, $k\geq1$, given locally as the image of the map
$f:U\subset\mathbb{R}^3\rightarrow\mathbb{R}^{3+k}$. Taking $p=f(u)$, the basis of $T_pM^{3}_{\reg}$ is $B^{f}=\{f_x,f_y,f_z\}$,
where $f_x=\frac{\partial f}{\partial x}$, etc. The orthonormal frame $\{e_1,\ldots,e_k\}$ is a frame of $N_pM^{3}_{\reg}$ if the
orientation of the frame $\{f_x.f_y,f_z,e_1,\ldots,e_k\}$ coincides with the orientation of $\mathbb{R}^{3+k}$.

The second fundamental form $II_p:T_pM^{3}_{\reg}\times T_pM^{3}_{\reg}\rightarrow N_pM^{3}_{\reg}$ is the bilinear map given by
$II_p(v,w)=\pi_2(d^2f(v,w))$, that projects the second derivative of $f$ onto the normal space at $p$.
The second fundamental form of $M^{3}_{\reg}$ at $p$ along a normal vector field $\nu$
 is the bilinear map $II_{p}^{\nu}:T_pM^{3}_{\reg}\times T_pM^{3}_{\reg}\rightarrow\mathbb{R}$ defined by
 $II_{p}^{\nu}(v,w)=\langle \nu,d^2f(v,w)\rangle$.

Given a point $p\in M^{3}_{\reg}$, $k\geq1$, and a unit tangent vector $v\in\mathbb{S}^2\subset T_pM^{3}_{\reg}$, the normal 
(codimension 2) section of $M^{3}_{\reg}$ in the direction $v$ is
$\gamma_v= M^{3}_{\reg}\cap\mathcal{H}_v$, where $\mathcal{H}_v=\{\lambda v\}\oplus N_pM^{3}_{\reg}$ is an affine subspace of codimension
$2$ through $p$ in $\mathbb{R}^{3+k}$. We denote by $\eta(v)$ the normal curvature vector of $\gamma_v$ in $N_pM^{3}_{\reg}$. When we vary $v\in\mathbb{S}^2\subset T_pM^{3}_{\reg}$, the vectors $\eta(v)$ describe a surface in the normal space, called the curvature locus of $M^{3}_{\reg}$
at $p$ and denoted by $\Delta_v$

The curvature locus can also be seen as the image of the unit tangent vectors at $p$ via $II(v,v)$.

\begin{lem}\cite{Carmen3var}\label{lemma.curv.media}
Taking spherical coordinates in $\mathbb{S}^2\subset T_pM^{3}_{\reg}$, we can parametrise the curvature locus of $M^{3}_{\reg}$
at $p$ by
$\eta:\mathbb{S}^{2}\subset T_pM^{3}_{\reg}\rightarrow N_pM^{3}_{\reg},\ (\theta,\phi)\mapsto\eta(\theta,\phi)$,
where
$$
\begin{array}{cl}
    \eta(\theta,\phi)= & H+(1+3\cos(2\phi))B_1+\cos(2\theta)\sin(\theta)^2 B_2 \\
     & +\sin(2\theta)\sin(\phi)^2 B_3+cos(\theta)\sin(2\phi)B_4+\sin(\theta)\sin(2\phi)B_5
\end{array}
$$
with
$$
\begin{array}{c}
     H=\frac{1}{3}(f_{xx}+f_{yy}+f_{zz}),\ B_1=\frac{1}{12}(-f_{xx}-f_{yy}+2f_{zz}),  \\
     B_2=\frac{1}{2}(f_{xx}-f_{yy}),\ B_3=f_{xy},\ B_4=f_{xz}\ \mbox{e}\ B_5=f_{yz}.
\end{array}
$$
\end{lem}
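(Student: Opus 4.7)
The plan is a direct computation: parametrise the unit sphere in the tangent space in spherical coordinates, expand $II_p(v,v)$ by bilinearity, and regroup using double-angle identities to match the claimed expression.

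First I would choose a local parametrisation in Monge-like form so that at the point $p$ the tangent frame $\{f_x,f_y,f_z\}$ is orthonormal; any unit vector $v\in\mathbb{S}^2\subset T_pM^3_{\reg}$ then takes the form
\begin{equation*}
v=\sin\phi\cos\theta\,f_x+\sin\phi\sin\theta\,f_y+\cos\phi\,f_z.
\end{equation*}
Since the curvature locus is the image of $v\mapsto II_p(v,v)=\pi_2(d^2f(v,v))$, bilinearity gives
\begin{align*}
\eta(\theta,\phi)=\pi_2\bigl(&\sin^2\!\phi\cos^2\!\theta\,f_{xx}+\sin^2\!\phi\sin^2\!\theta\,f_{yy}+\cos^2\!\phi\,f_{zz}\\
&+2\sin^2\!\phi\cos\theta\sin\theta\,f_{xy}+2\sin\phi\cos\phi\cos\theta\,f_{xz}+2\sin\phi\cos\phi\sin\theta\,f_{yz}\bigr).
\end{align*}

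Next I would apply the identities $\cos^2\!\alpha=(1+\cos 2\alpha)/2$, $\sin^2\!\alpha=(1-\cos 2\alpha)/2$, $2\sin\alpha\cos\alpha=\sin 2\alpha$. The three off-diagonal coefficients become $\sin^2\!\phi\sin(2\theta)$, $\cos\theta\sin(2\phi)$, $\sin\theta\sin(2\phi)$, matching the terms with $B_3=f_{xy}$, $B_4=f_{xz}$, $B_5=f_{yz}$ exactly. For the three diagonal terms one has to verify that the coefficients of $f_{xx},f_{yy},f_{zz}$ agree when the right-hand side is written out. Collecting contributions of $H$, $(1+3\cos 2\phi)B_1$ and $\cos(2\theta)\sin^2\!\phi\,B_2$, the coefficient of $f_{xx}$ is
\begin{equation*}
\tfrac13-\tfrac{1+3\cos 2\phi}{12}+\tfrac{\cos 2\theta\,\sin^2\!\phi}{2}=\tfrac{\sin^2\!\phi}{2}+\tfrac{\cos 2\theta\,\sin^2\!\phi}{2}=\sin^2\!\phi\cos^2\!\theta,
\end{equation*}
and the analogous computation for $f_{yy}$ (with opposite sign on the $B_2$ term) yields $\sin^2\!\phi\sin^2\!\theta$. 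Finally $B_2,B_3,B_4,B_5$ contribute nothing to $f_{zz}$, so its coefficient is $\tfrac13+\tfrac{1+3\cos 2\phi}{6}=\cos^2\!\phi$, closing the verification.

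There is no real conceptual obstacle; the whole argument is the bilinear expansion followed by trigonometric bookkeeping. The only delicate point is recognising that the basis $\{H,B_1,B_2,B_3,B_4,B_5\}$ is just the decomposition of the space of $3\times 3$ symmetric matrices into trace, traceless-diagonal and off-diagonal parts, which is exactly what the double-angle identities reproduce. Note also the small typo in the statement: the second-line coefficient should read $\cos(2\theta)\sin^2(\phi)\,B_2$ rather than $\cos(2\theta)\sin^2(\theta)\,B_2$, as the verification above shows.
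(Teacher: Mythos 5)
Your computation is correct and is exactly the standard direct verification of this result; the paper itself gives no proof, simply citing the reference, so there is nothing to compare beyond noting that the bilinear expansion of $II_p(v,v)$ in spherical coordinates followed by the double-angle regrouping is the expected argument. Your identification of the typo is also right: the $B_2$ coefficient must be $\cos(2\theta)\sin^2(\phi)$ rather than $\cos(2\theta)\sin^2(\theta)$, since only then do the coefficients of $f_{xx}$ and $f_{yy}$ reduce to $\sin^2(\phi)\cos^2(\theta)$ and $\sin^2(\phi)\sin^2(\theta)$ as your check shows.
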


The mean curvature of $M^{3}_{\reg}$ at $p$ is given by $H(p)=\frac{1}{3}(f_{xx}+f_{yy}+f_{zz})(p)$, the first normal space
is $N_p^1M^{3}_{\reg}=\langle H,B_1,B_2,B_3,B_4,B_5\rangle_{(p)}$. The affine hull of the curvature locus is denoted by $Aff_p$ and
the linear subspace of $N_p^1M^{3}_{\reg}$ parallel to $Aff_p$ by $E_p$. Also, a point $p\in M^{3}_{\reg}$ is said to be
of type $(M^{3}_{\reg})_i$, $i=0,1,\ldots,6$ if $\dim N_p^1M^{3}_{\reg}=i$.

The curvature locus of a regular $3$-manifold in $\mathbb{R}^{3+k}$ can be seen as the image of the
classical Veronese surface of order $2$ via a convenient linear map.
The expression of this surface is given by the image of the unit sphere in $\mathbb{R}^3$ via the map
$\xi:\mathbb{R}^3\rightarrow\mathbb{R}^6$, with
$\xi(u,v,w)=(u^2,v^2,w^2,\sqrt{2}uv,\sqrt{2}uw,\sqrt{2}vw)$. For more details, see \cite{Carmen3var,Carmen3var2}.

The next theorem describes the possible topological types of the curvature locus of a regular $3$-manifold in $\mathbb{R}^6$.

\begin{teo}\cite{Carmen3var}
The curvature locus at a point $p$ of type $(M^{3}_{\reg})_3$ in a $3$-manifold $M^{3}_{\reg}\subset\mathbb{R}^6$ is isomorphic to one of the following:
\begin{itemize}
    \item[(i)] If $H(p)\in E_p$: A Roman Steiner surface (Figure \ref{fig:steiner1}), a Cross-Cap surface (Figure \ref{fig:cross-cap}), a Steiner surface of type $5$ (Figure \ref{fig:tipo5}), a Cross-Cup surface (Figure \ref{fig:cross-cup}), an ellipsoid or a (compact) cone.
    \item[(ii)] If $H(p)\notin E_p$: An elliptic region, a triangle, a compact planar cone or a planar projection of type $1$, $2$ or $3$ of the Veronese surface.
\end{itemize}
Here, $H(p)$ is the mean curvature vector.

\begin{center}
\begin{figure}
\begin{minipage}[b]{0.45\linewidth} \hspace{1.4cm}
\includegraphics[scale=0.35]{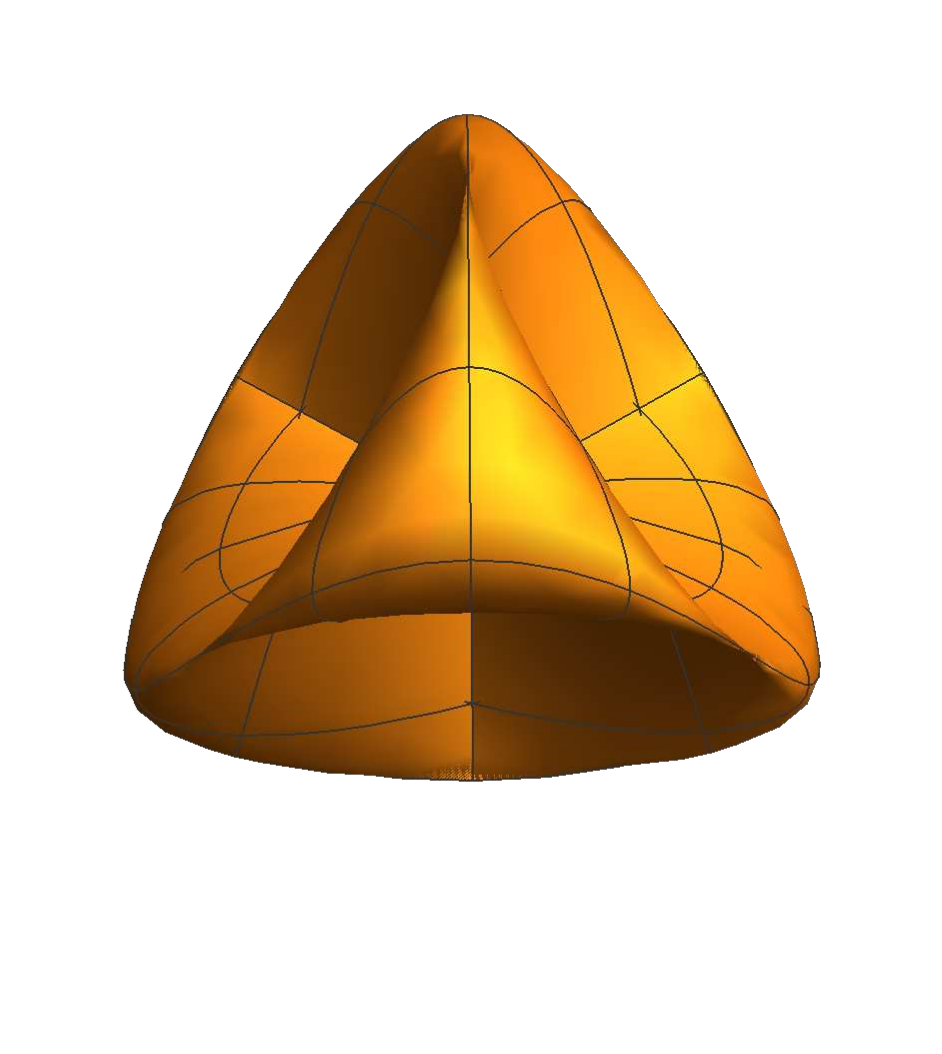}
\caption{Roman Steiner.}
\label{fig:steiner1}
\end{minipage} \hfill
\begin{minipage}[b]{0.45\linewidth} \hspace{1.4cm}
\includegraphics[scale=0.35]{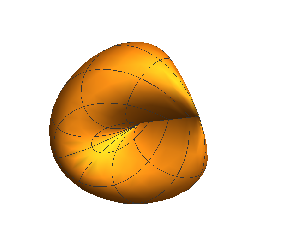}
\caption{Cross-cap.}
\label{fig:cross-cap}
\end{minipage} \hfill \\
\begin{minipage}[b]{0.45\linewidth} \hspace{1cm}
\includegraphics[scale=0.35]{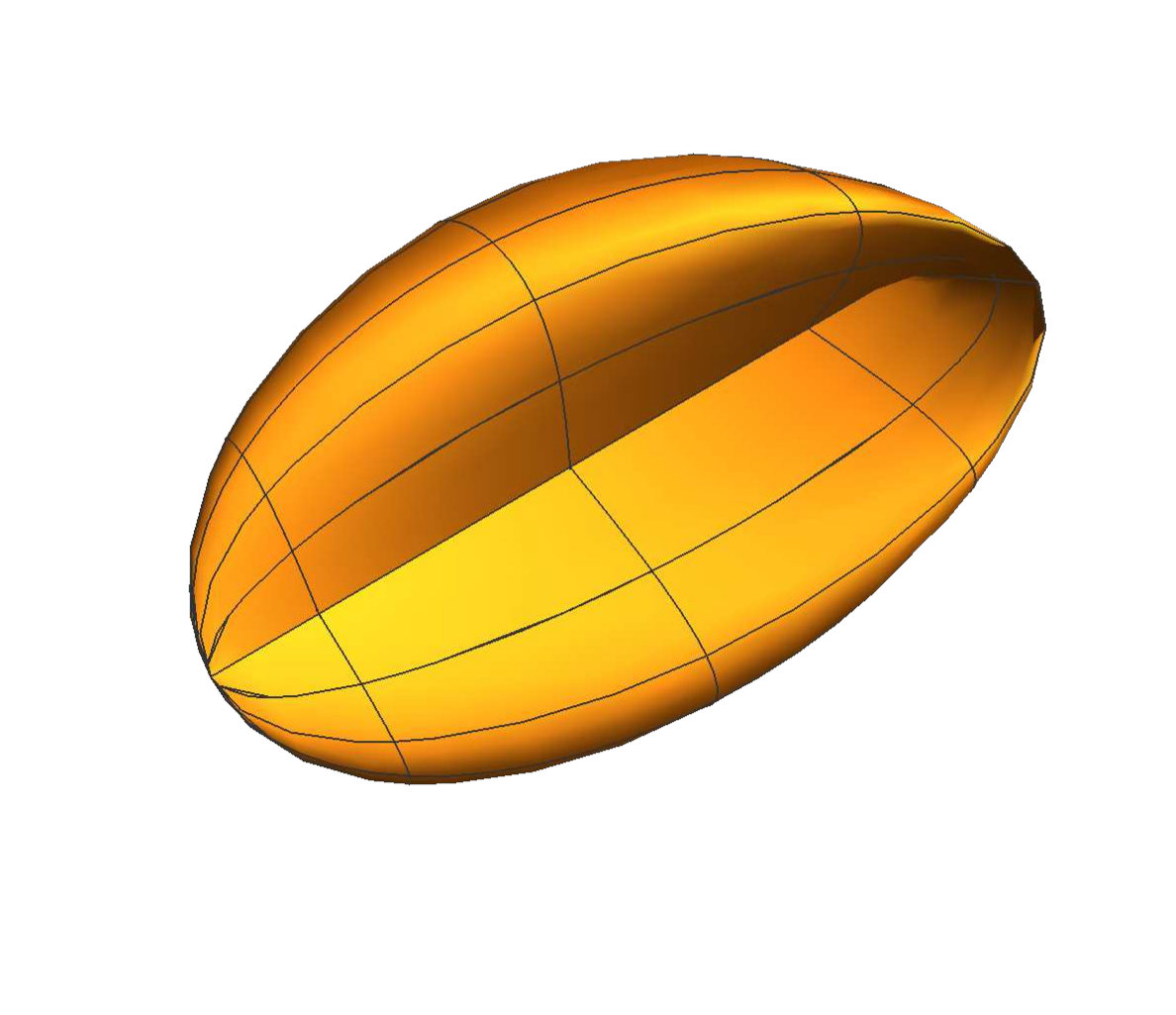}
\caption{Steiner Type $5$.}
\label{fig:tipo5}
\end{minipage} \hfill
\begin{minipage}[b]{0.45\linewidth} \hspace{1.5cm}
\includegraphics[scale=0.35]{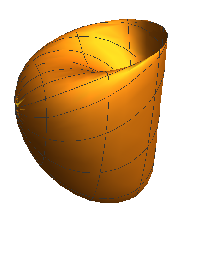}
\caption{Cross-cup.}
\label{fig:cross-cup}
\end{minipage} \hfill
\end{figure}
\end{center}

\end{teo}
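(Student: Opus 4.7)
My plan is to recognise, using Lemma \ref{lemma.curv.media}, that $\Delta_v$ is an affine image of the classical Veronese surface. Setting $(u,v,w)=(\sin\phi\cos\theta,\sin\phi\sin\theta,\cos\phi)\in\mathbb{S}^2$ and applying the identities $\cos(2\theta)\sin^2\phi=u^2-v^2$, $\sin(2\theta)\sin^2\phi=2uv$, $\cos\theta\sin(2\phi)=2uw$, $\sin\theta\sin(2\phi)=2vw$ and $1+3\cos(2\phi)=4w^2-2u^2-2v^2$, the parametrisation of $\eta$ rewrites as
$$\eta(u,v,w)=H(p)+L\bigl(\xi(u,v,w)\bigr),$$
where $\xi$ is the Veronese embedding and $L\colon\mathbb{R}^6\to N_pM^{3}_{\reg}$ is a linear map whose image is $E_p$. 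Thus $\Delta_v=H(p)+L(\mathcal{V})$ with $\mathcal{V}=\xi(\mathbb{S}^2)$, and the problem reduces to classifying, up to affine equivalence of the target, the linear images of $\mathcal{V}$ inside the three-dimensional normal space $N_pM^{3}_{\reg}$.

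Next, I would observe that the dichotomy has a clean geometric meaning: $H(p)\in E_p$ exactly when $0\in Aff_p$, that is, when the affine hull of the curvature locus passes through the origin of the normal space. In the case $H(p)\in E_p$, the hypothesis that $p$ is of type $(M^{3}_{\reg})_3$ forces the restriction of $L$ to the affine span of $\mathcal{V}$ to have maximal rank three, so that $\Delta_v$ is a genuine surface in $N_pM^{3}_{\reg}$. The action of $SO(3)$ on $\mathbb{S}^2$ descends to a linear action on $\mathcal{V}$, and together with the $GL(N_pM^{3}_{\reg})$-action on the target one can reduce $L$ to a short list of normal forms. For each normal form I would then compute the critical set and the self-intersection locus: the generic normal form should produce the Roman Steiner surface, while successive specialisations (coincidences of pinch lines, collapse of a Whitney umbrella, degeneration of the branch curve) should produce in turn the cross-cap, the Steiner of type $5$, the cross-cup, the ellipsoid and the compact cone.

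In the complementary case $H(p)\notin E_p$, the affine hull of $\Delta_v$ avoids the origin, so $E_p$ is a plane (or smaller), and $\Delta_v$ is contained in an affine plane not through $0$. Composing $\mathcal{V}$ with a rank-two linear map produces the classical planar degenerations: generically the elliptic region bounded by the discriminant, specialising to a triangle, a compact planar cone and the three planar projections of the Veronese surface listed in the statement. The main obstacle will be the combinatorial step inside the case $H(p)\in E_p$: once the $SO(3)\times GL_3$-action is exploited to put $L$ in normal form, one must match each resulting image with a named member of the Steiner family. I expect this to require computing the Jacobian ideal and double-point locus for each candidate normal form and comparing them with the standard defining equations of the Roman Steiner, cross-cap, Steiner of type $5$ and cross-cup surfaces.
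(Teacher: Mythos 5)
First, note that the paper offers no proof of this statement at all: it is quoted directly from \cite{Carmen3var}, so there is no in-paper argument to measure yours against. Judged on its own terms, your set-up is correct and is indeed the strategy of the cited reference: the trigonometric identities you list do convert the parametrisation of Lemma \ref{lemma.curv.media} into the form $H(p)+L(\xi(u,v,w))$ with $\xi$ the Veronese embedding and $L$ a linear map with image $\langle B_1,\ldots,B_5\rangle=E_p$, and the dichotomy $H(p)\in E_p$ versus $H(p)\notin E_p$ does coincide with $0\in Aff_p$ versus $0\notin Aff_p$, separating the non-planar loci (where, at a point of type $(M^{3}_{\reg})_3$, $\dim E_p=3$) from the planar ones. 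One small slip: in case (ii) you say $E_p$ is ``a plane (or smaller)''; since $N_p^1M^{3}_{\reg}=E_p+\langle H(p)\rangle$ and $\dim N_p^1M^{3}_{\reg}=3$ with $H(p)\notin E_p$, the dimension of $E_p$ is exactly $2$.

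The genuine gap is that everything after this reduction is deferred, and that deferred part is the theorem. ``Reduce $L$ to a short list of normal forms under the $SO(3)\times GL_3$ action and match each image with a named surface'' is announced but not executed: you produce no list of normal forms, no argument that the list is exhaustive, and no computation identifying any normal form with the Roman Steiner surface, the cross-cap, the Steiner of type $5$, the cross-cup, the ellipsoid or the compact cone --- nor, in the planar case, with the elliptic region, the triangle, the compact planar cone, or the three planar projections of the Veronese surface. The sentence ``the generic normal form should produce the Roman Steiner surface, while successive specialisations should produce in turn\ldots'' is a prediction of the outcome, not a derivation of it; in particular nothing in your sketch rules out further topological types or shows that the six (respectively six) listed ones all occur. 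To complete the argument you would have to carry out the orbit classification of rank-$3$ (resp.\ rank-$2$) linear maps on the span of the Veronese surface under the induced actions on source and target, and then perform the singular-locus and double-point computations you mention, exactly as is done in \cite{Carmen3var}.
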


The following refers to regular manifolds $M^{n}_{\reg}$ immersed in $\mathbb{R}^{2n}$. For such objects, the definitions of tangent and normal spaces and second fundamental forms are analogues to the case of $3$-manifolds in $\mathbb{R}^{3+k}$ (see \cite{dreibelbis} for details). The curvature Veronese submanifold of $M^{n}_{\reg}$ at $p$ is the image of the unit tangent sphere $\mathbb{S}^{n-1}\subset T_pM^n_{\reg}$ via the second fundamental form.

Let $\{e_1,\ldots,e_n\}$ and $\{v_1,\ldots,v_n\}$ be basis for $T_pM^n_{\reg}$ and $N_pM^n_{\reg}$, respectively. For each $u\in T_pM^n_{\reg}$,
$A(u)$ denotes the $n\times n$ matrix such that $A(u)_{ij}=II_{v_{i}}(e_j,u)=\langle II(e_j,u),v_{i}\rangle$.

\begin{teo}\cite{dreibelbis}\label{Teo-Dreibelbis}
Let $f:M^n_{\reg}\rightarrow\mathbb{R}^{2n}$ be an immersion and let $u$ be a unit tangent vector at $p\in M^n_{\reg}$. The following are
equivalent:
\begin{itemize}
    \item[(i)] The vector $u$ satisfies $\det(A(u))\neq0$;
    \item[(ii)] There exists a unit normal vector $v$ such that $II_v(u,\cdot)=0$;
    \item[(iii)] There exists a height function $h_v(x)=\langle v,x\rangle$ such that $h\circ f$ has a degenerate (non Morse) singularity with $\langle h,d^2fu\rangle=0$;
    \item[(iv)] The vector $II(u,u)$ is tangent to the curvature Veronese submanifold at $II(u,u)$, or the curvature Veronese submanifold has a singularity at $u$.
\end{itemize}
\end{teo}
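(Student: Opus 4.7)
The plan is to establish the chain (i)$\Leftrightarrow$(ii)$\Leftrightarrow$(iii)$\Leftrightarrow$(iv) by reinterpreting each condition as a statement about the linear map $L_u:T_pM^n_{\reg}\to N_pM^n_{\reg}$ given by $L_u(w)=II(u,w)$, whose matrix with respect to the bases $\{e_j\}$ and $\{v_i\}$ coincides (up to transpose) with $A(u)$. Since $\dim T_pM^n_{\reg}=\dim N_pM^n_{\reg}=n$, the map $L_u$ is square, and condition (i) asserts that $L_u$ is an isomorphism; I will recast conditions (ii)--(iv) as the complementary statement that $L_u$ is degenerate, obtained respectively via linear duality, via singularity theory of height functions, and via the Veronese parametrisation.

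For (i)$\Leftrightarrow$(ii) I will apply basic row/column duality to $A(u)$. A nontrivial row dependence $\sum_i c_i A(u)_{ij}=0$ for every $j$ is equivalent to the existence of a nonzero normal vector $v=\sum_i c_i v_i$ with $\langle v,II(u,e_j)\rangle=0$ for every $j$, hence $II_v(u,\cdot)\equiv 0$ on $T_pM^n_{\reg}$; normalising $v$ produces the unit vector required by (ii). This identifies nondegeneracy of $A(u)$ with the non-existence of such a $v$.

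For (ii)$\Leftrightarrow$(iii) I will recall that whenever $v\in N_pM^n_{\reg}$, the point $p$ is a critical point of the height function $h_v\circ f$, with Hessian at $p$ given by $(w_1,w_2)\mapsto\langle v,d^2f(w_1,w_2)\rangle=II_v(w_1,w_2)$. The singularity being degenerate with $u$ in the kernel of this Hessian is literally $II_v(u,\cdot)=0$, so (iii) is a direct reformulation of (ii).

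For (ii)$\Leftrightarrow$(iv) I will compute the differential of the Veronese parametrisation $\xi:\mathbb{S}^{n-1}\to N_pM^n_{\reg}$, $\xi(w)=II(w,w)$. For every $\dot w\in T_u\mathbb{S}^{n-1}=u^\perp$ one has $d\xi_u(\dot w)=2\,II(u,\dot w)$, so singularity of the Veronese at $u$ is equivalent to $\ker L_u\cap u^\perp\neq\{0\}$, while $II(u,u)$ being tangent to the Veronese at $\xi(u)$ means $II(u,u)=II(u,\dot w)$ for some $\dot w\perp u$, i.e.\ $u-\dot w\in\ker L_u$ with $u-\dot w\neq 0$. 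Conversely, any nonzero $w_0\in\ker L_u$ decomposed as $\alpha u+\dot w$ with $\dot w\perp u$ produces the singular case when $\alpha=0$ and the tangent case when $\alpha\neq 0$, since then $II(u,u)=-\alpha^{-1}II(u,\dot w)$ lies in the tangent space to the Veronese at $\xi(u)$. The main obstacle will be the case analysis for (iv): carefully tracking this parallel/perpendicular decomposition and matching each subcase with the right geometric alternative, while verifying that the degenerate situation $II(u,u)=0$ is absorbed trivially into the tangent alternative (the origin lying in every linear tangent subspace).
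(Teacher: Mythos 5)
Your proposal is mathematically sound, but two points need to be made. First, on the comparison: the paper gives no proof of this theorem at all --- it is quoted from Dreibelbis \cite{dreibelbis} --- and the closest arguments in the paper are its singular analogues, Theorem \ref{equiv} (items (1)--(3)) and Theorem \ref{tangent} (the item-(iv) analogue). Your route is the same skeleton in coordinate-free dress: where the paper expands $II_{\nu}(u,v)$ in the coefficients $l_{n_i},m_{n_i},\dots$ and reads off two $3\times3$ linear systems --- one in $(\nu_1,\nu_2,\nu_3)$, yielding $\det A(u)=0$, one in $(\alpha,\beta,\gamma)$, yielding $u\in\ker\Hess h_{\nu}$ --- you phrase both as kernel/cokernel statements about the single square map $L_u=II(u,\cdot)$; and where the paper (following Dreibelbis) computes $\partial_\theta\eta=2II(u,u_\theta)$, $\partial_\phi\eta=2II(u,u_\phi)$ and tests linear dependence of $\{II(u,u),II(u,u_\theta),II(u,u_\phi)\}$, you compute $d\xi_u(\dot w)=2II(u,\dot w)$ on the sphere and run the decomposition $w_0=\alpha u+\dot w$. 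Even the rank/duality step you use for (i)$\Leftrightarrow$(ii) appears inside the paper's proof of Theorem \ref{tangent} (``the image of $g$ is contained in a plane, so there exists $\nu$\dots''). Your packaging via $L_u$ is cleaner and frame-independent (note your row-dependence argument needs no orthonormality of $\{v_i\}$), and your reading of the garbled item (iii) as ``$u\in\ker\Hess(h_v\circ f)$'' is the correct one, matching Theorem \ref{equiv}(3).

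Second --- and this you must not leave implicit --- item (i) as printed, $\det(A(u))\neq0$, is a sign typo for $\det(A(u))=0$: your own reduction shows (ii), (iii), (iv) are each equivalent to \emph{degeneracy} of $L_u$, and this is confirmed by the paper's singular analogue (Theorem \ref{equiv}(2) reads $\det A(u)=0$) and by the definition of asymptotic directions following the theorem (with ``$\neq0$'' a generic direction would be asymptotic). As written, your plan is internally inconsistent: you announce the chain (i)$\Leftrightarrow$(ii)$\Leftrightarrow$(iii)$\Leftrightarrow$(iv) while simultaneously recasting (ii)--(iv) as ``the complementary statement'' to (i), which would prove (i)$\Leftrightarrow\lnot$(ii). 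State explicitly that you prove the corrected statement with (i) replaced by $\det A(u)=0$; with that emendation the proof goes through, including the fine points you flag (the case $II(u,u)=0$ is absorbed into the tangency alternative because the tangent set is a linear subspace, and $u-\dot w\neq0$ since $\langle u,u-\dot w\rangle=1$).
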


A unit tangent vector $u\in\mathbb{S}^{n-1}\subset T_pM^n_{\reg}$ is an asymptotic direction  if it satisfies any of the items in Theorem \ref{Teo-Dreibelbis}.

\subsection{Corank $1$ surfaces in $\mathbb{R}^n$, $n=3,4$}


Let $M^{2}_{\sing}$ be a corank $1$ surface in $\mathbb{R}^{n}$, $n=3,4$, at $p$. The singular surface $M^{2}_{\sing}$ will be taken as the image of a smooth map $g:\tilde{M}\rightarrow \mathbb{R}^{n}$, where $\tilde{M}$ is a smooth regular surface and $q\in\tilde{M}$ is a corank $1$ point of $g$ such that $g(q)=p$. Also, consider $\phi:U\rightarrow\mathbb{R}^{2}$ a local coordinate system defined in an open neighbourhood $U$ of $q$ at $\tilde{M}$. Using this construction, we may consider a local parametrisation $f=g\circ\phi^{-1}$ of $M$ at $p$ (see the diagram below).

$$
\xymatrix{
\mathbb{R}^{2}\ar@/_0.7cm/[rr]^-{f} & U\subset\tilde{M}\ar[r]^-{g}\ar[l]_-{\phi} & M^{2}_{\sing}\subset\mathbb{R}^{n}
}
$$

The tangent line of $M^{2}_{\sing}$ at $p$, $T_{p}M^{2}_{\sing}$, is given by $\mbox{Im}\ dg_{q}$, where $dg_{q}:T_{q}\tilde{M}\rightarrow T_{p}\mathbb{R}^{4}$ is the differential map of $g$ at $q$. Thus, the normal space of $M^{2}_{\sing}$ at $p$, $N_{p}M^{2}_{\sing}$, is the subspace satisfying $T_{p}M^{2}_{\sing}\oplus N_{p}M^{2}_{\sing}=T_{p}\mathbb{R}^{n}$.

Consider the orthogonal projection $\pi_2:T_{p}\mathbb{R}^{n}\rightarrow N_{p}M^{2}_{\sing}$, $w\mapsto \pi_2(w)$.
The first fundamental form of $M^{2}_{\sing}$ at $p$, $I:T_{q}\tilde{M}\times T_{q}\tilde{M}\rightarrow \mathbb{R}$ is given by
$$I(u,v)=\langle dg_{q}(u),dg_{q}(v)\rangle,\ \ \forall\ u,v\in T_{q}\tilde{M}.$$

The second fundamental form of $M^{2}_{\sing}$ at $p$, $II:T_{q}\tilde{M}\times T_{q}\tilde{M}\rightarrow N_{p}M$ in the basis $\{\partial_{x},\partial_{y}\}$ of $T_{q}\tilde{M}$ is given by
$$
\begin{array}{c}
     II(\partial_{x},\partial_{x})=\pi_2(f_{xx}(\phi(q))),\  II(\partial_{x},\partial_{y})=\pi_2(f_{xy}(\phi(q))),\
      II(\partial_{y},\partial_{y})=\pi_2(f_{yy}(\phi(q)))
\end{array}
$$
and we extend it to the whole space in a unique way as a symmetric bilinear map.

Given a normal vector $\nu\in N_{p}M^{2}_{\sing}$, we define the second fundamental form along $\nu$, $II_{\nu}:T_{q}\tilde{M}\times T_{q}\tilde{M}\rightarrow\mathbb{R}$ given by $II_{\nu}(u,v)=\langle II(u,v),\nu\rangle$, for all $u,v\in T_{q}\tilde{M}$.
The coefficients of $II_{\nu}$ with respect to the basis $\{\partial_{x},\partial_{y}\}$ of $T_{q}\tilde{M}$ are
$$
\begin{array}{cc}
     l_{\nu}(q)=\langle \pi_2(f_{xx}),\nu\rangle(\phi(q)),\ m_{\nu}(q)=\langle \pi_2(f_{xy}),\nu\rangle(\phi(q)),  \\
     n_{\nu}(q)=\langle \pi_2(f_{yy}),\nu\rangle(\phi(q)).
\end{array}
$$
Given $u=\alpha\partial_{x}+\beta\partial_{y}\in T_{q}\tilde{M}$ and
fixing an orthonormal frame $\{\nu_{1},\ldots,\nu_{n-1}\}$ of $N_{p}M^{2}_{\sing}$,
$$
\begin{array}{cl}\label{eq.2ff}
II(u,u) & =II_{\nu_{1}}(u,u)\nu_{1}+\ldots+II_{\nu_{n-1}}(u,u)\nu_{n-1} \\
        & =\displaystyle{\sum_{i=1}^{n-1}(\alpha^{2}l_{\nu_{i}}(q)+2\alpha\beta m_{\nu_{i}}(q)+\beta^{2}n_{\nu_{i}}(q))\nu_{i}},
\end{array}
$$
$n=3,4$.
Moreover, the second fundamental form is represented by the matrix of coefficients
$$
\left(
  \begin{array}{ccc}
    l_{\nu_{1}} & m_{\nu_{1}} & n_{\nu_{1}} \\
    \vdots & \vdots & \vdots \\
    l_{\nu_{n-1}} & m_{\nu_{n-1}} & n_{\nu_{n-1}} \\
  \end{array}
\right).
$$

Let $C_{q}\subset T_{q}\tilde{M}$ be the subset of unit tangent vectors and let $\eta:C_{q}\rightarrow N_{p}M$ be the map given by $\eta(u)=II(u,u)$. The \emph{curvature parabola} of $M^{2}_{\sing}$ at $p$, denoted by $\Delta_{p}$, is the subset $\eta(C_q)$.

The curvature parabola is a plane curve for both cases, $n=3$ and $n=4$, and it can degenerate into a half-line, a line or even a point.
This special curve plays a similar role as the curvature ellipse does for regular surfaces. Therefore, it contains information about the second order geometry of the surface.

For the case $n=3$, there is no doubt about which plane contains $\Delta_p$, since the normal space is a plane. However, if $n=4$, the normal space has dimension $3$ and if $\Delta_p$ degenerates, we need to be careful.
Let $M^{2}_{\sing}\subset\mathbb{R}^4$ be a corank $1$ surface at $p$. The minimal affine space which contains the curvature parabola is denoted by $Aff_{p}$. The plane denoted by $E_{p}$ is the vector space: parallel to $Aff_{p}$ when $\Delta_{p}$ is a non degenerate parabola, the plane through $p$ that contains $Aff_{p}$ when $\Delta_{p}$ is a non radial half-line or a non radial line and any plane through $p$ that contains $Aff_{p}$ when $\Delta_{p}$ is a radial half-line, a radial line or a point.

A non zero direction $u\in T_{q}\tilde{M}$ is called asymptotic if there is a non zero vector $\nu\in N_{p}M^{2}_{\sing}$ (for $n=3$) or $\nu\in E_{p}$ (for $n=4$) such that
$II_{\nu}(u,v)=\langle II(u,v),\nu\rangle=0$ for all $v\in T_{q}\tilde{M}$.
Moreover, in such case, we say that $\nu$ is a binormal direction.

For the case $n=4$, the normal directions $\nu\in N_{p}M^{2}_{\sing}$ which are not in the plane $E_p$ but also satisfy the condition $II_{\nu}(u,v)=\langle II(u,v),\nu\rangle=0$, are called degenerate directions. The subset of degenerate directions in $N_pM^2_{\sing}$
is a cone and the binormal direcitons are those in the intersection of this cone with $E_p$.

It is possible to take a coordinate system $\phi$ and make rotations in the target in order to obtain
$f(x,y)=g\circ\phi^{-1}(x,y)=(x,f_{2}(x,y),\ldots,f_{n}(x,y))$,
where $\frac{\partial f_{i}}{\partial x}(\phi(q))=\frac{\partial f_{i}}{\partial y}(\phi(q))=0$ for $i=2,\ldots,n$. Taking an orthonormal frame $\{\nu_{1},\ldots,\nu_{n-1}\}$ of $N_{p}M^2_{\sing}$, the curvature parabola $\Delta_{p}$ can be parametrised by
$$\eta(y)=\sum_{i=1}^{n-1}(l_{\nu_{i}}+2m_{\nu_{i}}y+n_{\nu_{i}}y^{2})\nu_{i}.$$
Each parameter $y\in\mathbb{R}$ corresponds to a unit tangent direction $u=\partial_{x}+y\partial_{y}=(1,y)\in C_{q}$. We denote by $y_{\infty}$ the parameter corresponding to the null tangent direction given by $u=\partial_{y}=(0,1)$. For each possibility of $\Delta_{p}$ we define $\eta(y_{\infty})$: when $\Delta_{p}$ is a line or a half-line $\eta(y_{\infty})=\eta'(y_{\infty})=\eta'(y)/|\eta'(y)|$ where $y>0$ is any value such that $\eta'(y)\neq0$. When $\Delta_{p}$ degenerates into a point $\nu$, $\eta(y_{\infty})=\nu$ and $\eta'(y_{\infty})=0$. Finally, in the case where $\Delta_{p}$ is a non-degenerate parabola, $\eta(y_{\infty})$ and $\eta'(y_{\infty})$ are not defined. The previous construction was first considered for corank $1$ surfaces in $\mathbb{R}^{3}$ (see \cite{Benedini/Sinha/Ruas,MartinsBallesteros}).

\subsection{Corank $1$ $3$-manifolds in $\mathbb{R}^5$}

In \cite{BenediniRuasSacramento}, the authors dedicate themselves to the study of singular corank $1$ $3$-manifolds in $\mathbb{R}^5$,
inspired by \cite{Benedini/Sinha/Ruas,MartinsBallesteros}. In that paper, they define the fundamental forms, the curvature locus and
also investigate some aspects of the second order geometry of those manifolds.

Let $M^3_{\sing}\subset\mathbb{R}^5$ be a 3-manifold with a singularity of corank 1 at $p\in M$. The construction here is the same as for singular surfaces. We assume that $M^3_{\sing}$ is the image of a smooth map $g:\tilde{M}\rightarrow\mathbb{R}^5$, where $\tilde{M}$ is a smooth regular 3-manifold and $q\in \tilde{M}$ is a
singular corank 1 point of $g$ such that $g(q)=p$.
Taking $\phi:U\rightarrow \mathbb{R}^3$ defined on some open neighbourhood $U$ of $q$ in $\tilde{M}$,
we say that $f=g\circ\phi^{-1}$ is a local parametrisation of $M^3_{\sing}$ at $p$.

The following definitions are analogous to the ones presented before: tangent space ($T_pM^3_{\sing}$), normal space ($N_pM^3_{\sing}$) and
first fundamental form, $I:T_q\tilde{M}\times T_q\tilde{M}\rightarrow\mathbb{R}$.
Taking the frame $\{\partial x,\partial y,\partial z\}$ of $T_{q}\tilde{M}$, the coefficients of the first fundamental form with respect to $\phi$ are:
$$
\begin{array}{cc}
    E_{xx}(q)=I(\partial x,\partial x)=\langle f_x,f_x\rangle(\phi(q)),  & E_{xy}(q)=I(\partial x,\partial y)=\langle f_x,f_y\rangle(\phi(q)), \\
    E_{yy}(q)=I(\partial y,\partial y)=\langle f_y,f_y\rangle(\phi(q)), & E_{zz}(q)=I(\partial z,\partial z)=\langle f_z,f_z\rangle(\phi(q)), \\
    E_{xz}(q)=I(\partial x,\partial z)=\langle f_x,f_z\rangle(\phi(q)), & E_{yz}(q)=I(\partial y,\partial z)=\langle f_y,f_z\rangle(\phi(q)).
\end{array}
$$
Notice that if $u=a\partial x+b\partial y + c\partial z$ then
$$I(u,u)=a^2E_{xx}(q)+2abE_{xy}(q)+b^2E_{yy}(q)+c^2E_{zz}(q)+2acE_{xz}(q)+2bcE_{yz}(q).$$

The second fundamental form of $M^3_{\sing}$ at $p$ is the map $I:T_{q}\tilde{M}\times T_{q}\tilde{M}\rightarrow N_{p}M^3_{\sing}$ given by
$$\begin{array}{ccc}
    II(\partial x,\partial x)= \pi_2(f_{xx}),\,\,&
    II(\partial x,\partial y)= \pi_2(f_{xy}),\,\,&
    II(\partial y,\partial y)= \pi_2(f_{yy}),  \\
    II(\partial z,\partial z)= \pi_2(f_{zz}),\,\,&
    II(\partial x,\partial z)= \pi_2(f_{xz}),\,\,&
    II(\partial y,\partial z)= \pi_2(f_{yz}),
  \end{array}
$$
where $\pi_2:T_p\mathbb{R}^4\rightarrow N_pM^3_{\sing}$ is the orthogonal projection and they are all evaluated in $\phi(q)$
and we extend $II$ to $T_{q}\tilde{M}\times T_{q}\tilde{M}$ in a unique way as a symmetric bilinear map.

Given a normal vector $\nu\in N_{p}M$, the second fundamental form of $M^3_{\sing}$ at $p$ along $\nu$, $II_\nu:T_{q}\tilde{M}\times T_{q}\tilde{M}\rightarrow \mathbb{R}$, is defined by $II_\nu(u,v)=\langle II(u,v),\nu\rangle$.

The coefficients of $II_\nu$ in terms of local coordinates $(x,y,z)$ are:
$$\begin{array}{ccc}
    l_\nu(q)=\langle \pi_2(f_{xx}),\nu\rangle,\,\, & m_\nu(q)= \langle \pi_2(f_{xy}),\nu\rangle,\,\,  & n_\nu(q)= \langle \pi_2(f_{yy}),\nu\rangle,  \\
    p_\nu(q)= \langle \pi_2(f_{zz}),\nu\rangle,\,\, & q_\nu(q)= \langle \pi_2(f_{xz}),\nu\rangle,\,\,  & r_\nu(q)= \langle \pi_2(f_{yz}),\nu\rangle,
  \end{array}
$$
and the partial derivatives are all evaluated at $\phi(q)$.

For a fixed orthonormal frame $\{\nu_1,\nu_2,\nu_3\}$ of $N_{p}M^3_{\sing}$, the quadratic form associated to the second fundamental form is
$$  II(u,u)= \sum_{i=1}^3II_{\nu_i}(u,u)\nu_i=\sum_{i=1}^3(a^2 l_{\nu_i}+2abm_{\nu_i}+b^2n_{\nu_i}+c^2p_{\nu_i}+2acq_{\nu_i}+2bcr_{\nu_i})\nu_i,
$$
and the above coefficients calculated at $q$. Furthermore, in terms of the chosen frame, the second fundamental form can be represented by the following $3\times6$ matrix of coefficients: $$\left(
                                                         \begin{array}{cccccc}
                                                           l_{\nu_1} & m_{\nu_1}& n_{\nu_1} & p_{\nu_1} & q_{\nu_1} & r_{\nu_1} \\
                                                           l_{\nu_2} & m_{\nu_2} & n_{\nu_2} & p_{\nu_2} & q_{\nu_2} & r_{\nu_2}\\
                                                           l_{\nu_3} & m_{\nu_3} & n_{\nu_3} & p_{\nu_3} & q_{\nu_3} & r_{\nu_3}
                                                         \end{array}
                                                       \right).
$$

Let $C_q$ be the subset of unit vectors of $T_q\tilde{M}$ and let $\eta:C_q\rightarrow N_pM^3_{\sing}$ be the map given by $\eta(u)=II(u,u).$ We define the \emph{curvature locus} of $M^3_{\sing}$ at $p$, which we shall denote by $\Delta_{cv}$, as the subset $\eta(C_q)$.

Using suitable change of coordinates and rotations, we can write
$$f(x,y,z)=(x,y,f_1(x,y,z),f_2(x,y,z),f_3(x,y,z)),$$
with $(f_i)_x=(f_i)_y=(f_i)_z=0$ at $\phi(q)$, for $i=1,2,3$. Hence, the coefficients of the first fundamental form are $E=G=1$ and $F=H=I=J=0$. Furthermore, given a unit tangent vector $u\in C_q$ and writing $u=x\partial x+y\partial y+z\partial z$, since $$x^2E_{xx}(q)+2xyE_{xy}(q)+y^2E_{yy}(q)+z^2E_{zz}(q)+2xzE_{xz}(q)+2yzE_{yz}(q)=1$$
we have $x^2+y^2=1$, that is, $C_q$ is a unit cylinder parallel to the $z$-axis. Fixing an orthonormal frame $\{\nu_1,\nu_2,\nu_3\}$ of $N_pM^3_{\sing}$,
$$(x,y,z)\mapsto\sum_{i=1}^{3}(x^2 l_{\nu_i}+2xym_{\nu_i}+y^2n_{\nu_i}+z^2p_{\nu_i}+2xzq_{\nu_i}+2yzr_{\nu_i})\nu_i$$
is a parametrisation for curvature locus $\Delta_{cv}$, where $x^2+y^2=1$.

Similarly to the results in  \cite{Benedini/Sinha/Ruas,MartinsBallesteros}, the authors in \cite{BenediniRuasSacramento} presented a partition of all corank 1 map germs $f:(\mathbb{R}^3,0)\rightarrow(\mathbb{R}^5,0)$ according to their 2-jet under the action of $\mathcal{A}^2$, which denotes the space of 2-jets of diffeomorphisms in source and target. We denote by $J^2(3,5)$ the subspace of 2-jets $j^2f(0)$ of map germs $f:(\mathbb{R}^3,0)\rightarrow(\mathbb{R}^5,0)$ and by $\Sigma^1J^2(3,5)$ the subset of 2-jets of corank 1.

\begin{prop}\label{prop:orbitas}
There exist six orbits in $\Sigma^1J^2(2,3)$ under the action of $\mathcal{A}^2$, which are $$(x,y,xz,yz,z^2),\,(x,y,z^2,xz,0),\,(x,y,xz,yz,0),\,(x,y,z^2,0,0),\,(x,y,xz,0,0),\,(x,y,0,0,0).$$
\end{prop}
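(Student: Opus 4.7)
The plan is to first reduce an arbitrary corank 1 2-jet to a normal form parameterised by a $3\times 3$ matrix, then classify the orbits of that matrix under the residual $\mathcal{A}^2$-action. After using $\mathcal{A}^1$-linear changes to bring the 1-jet to $(x,y,0,0,0)$, I would apply target changes $Y_i=X_i-Q_i(X_1,X_2)$ to eliminate the pure $(x,y)$ quadratic parts of every component, and then source changes $x\mapsto x-L_1(x,y)z-c_1 z^2$ and $y\mapsto y-L_2(x,y)z-c_2 z^2$ to annihilate the quadratic parts of the first two components (these only modify $f_3,f_4,f_5$ by cubic or higher terms). The 2-jet then takes the form
$$j^2f=(x,y,q_3,q_4,q_5),\qquad q_i=a_i xz+b_i yz+c_i z^2,$$
determined by the $3\times 3$ matrix $M$ whose $i$-th row is $(a_i,b_i,c_i)$.

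Next, I would identify which $\mathcal{A}^2$-elements preserve this form and record their action on $M$. Four operations suffice: (i) $\mathrm{GL}(3)$ target changes of $(X_3,X_4,X_5)$ act on the rows of $M$; (ii) linear source changes $(x,y)\mapsto A(x,y)^T$ with $A\in \mathrm{GL}(2)$, composed with a compensating target change on $(X_1,X_2)$, act on the first two columns of $M$ by right multiplication by $A$; (iii) $z\mapsto\lambda z$ scales the first two columns of $M$ by $\lambda$ and the third by $\lambda^2$; and (iv) $z\mapsto z+\alpha x+\beta y$, after absorbing the resulting $x^2,xy,y^2$ terms by target changes, adds $2\alpha$ times the third column of $M$ to the first column and $2\beta$ times the third column to the second, leaving the third column unchanged. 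Two invariants of this action are immediate: the rank $r:=\mathrm{rank}(M)$, equal to the dimension of the first normal space, and the binary invariant $\epsilon$ defined to be $1$ if the third column of $M$ is nonzero and $0$ otherwise. Each of (i)--(iv) preserves $r$ and the zero/nonzero status of the third column, so $(r,\epsilon)$ is an $\mathcal{A}^2$-invariant.

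A case analysis on $r$ then reduces every $M$ to one of the six listed normal forms. For $r=0$, $M=0$ gives $(x,y,0,0,0)$. For $r=1$, row-reduce $M$ to leave a single nonzero row $(a,b,c)$: if $c=0$ (so $\epsilon=0$), rotate $(x,y)$ to reach $(1,0,0)$, yielding $(x,y,xz,0,0)$; if $c\neq 0$ (so $\epsilon=1$), use (iv) to kill $a,b$ and then scale $z$ to reach $(0,0,1)$, yielding $(x,y,z^2,0,0)$. For $r=2$: if $\epsilon=0$ the third column vanishes, so $\mathrm{GL}(3)$ and $\mathrm{GL}(2)$ on $(x,y)$ bring the two nonzero rows to $(1,0,0)$ and $(0,1,0)$, yielding $(x,y,xz,yz,0)$; if $\epsilon=1$, $\mathrm{GL}(3)$ produces rows $(a,b,1)$ and $(a',b',0)$ with $(a',b')\neq 0$, then (iv) clears the first row to $(0,0,1)$ and a rotation of $(x,y)$ sends $(a',b',0)$ to $(1,0,0)$, yielding $(x,y,z^2,xz,0)$. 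For $r=3$, $M$ is invertible, so $\mathrm{GL}(3)$ reduces it to $I_3$, yielding $(x,y,xz,yz,z^2)$. Since $(r,\epsilon)$ takes the six distinct values $(0,0),(1,0),(1,1),(2,0),(2,1),(3,1)$ on these six normal forms, they lie in six distinct orbits.

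The main obstacle is setting up the residual action carefully: each source change needs a matching target change that preserves the normal form, and one must check that operation (iv) really leaves the third column of $M$ untouched while (ii) acts only on the first two columns. Once this bookkeeping is done, the rest of the argument is elementary linear algebra.
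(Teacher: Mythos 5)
Your argument is correct: the reduction to $(x,y,q_3,q_4,q_5)$ with $q_i=a_ixz+b_iyz+c_iz^2$, the identification of the residual action on the matrix $M$, and the invariance of $(\operatorname{rank}M,\epsilon)$ together give both the completeness and the distinctness of the six orbits. Note that the paper itself gives no proof of this proposition --- it is quoted from the preprint \cite{BenediniRuasSacramento} (and the statement contains a typo: it should read $\Sigma^1J^2(3,5)$, not $\Sigma^1J^2(2,3)$); your reduction is essentially the same linear-algebra strategy used there and in the analogous classifications for corank $1$ surfaces in \cite{MartinsBallesteros} and \cite{Benedini/Sinha/Ruas}, so no further comparison is needed.
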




\section{Normal sections}\label{sections}






Consider $M^3_{\reg}\subset\mathbb{R}^{3+k}$, $k\geq1$ a regular $3$-manifold (resp. $M^3_{\sing}\subset\mathbb{R}^5$ a singular corank $1$ $3$-manifold). Let $u$ be a tangent direction in $T_pM^3_{\reg}$ (resp. $T_pM^3_{\sing}$) and $\{u=0\}$ the hyperplane in $\mathbb{R}^{3+k}$ (resp. $\mathbb{R}^5$) orthogonal to $u$. The \emph{normal section} of $M^3_{\reg}$ along $u$ is a regular surface $M^2_{\reg}=M^3_{\reg}\cap\{u=0\}$ contained in $\mathbb{R}^{3+k}\cap\{u=0\}\cong\mathbb{R}^{2+k}$ (resp. the normal section $M^2_{\sing}$ along $u$ is a singular corank $1$ surface $M^2_{\sing}=M^3_{\sing}\cap\{u=0\}$ contained in $\mathbb{R}^{5}\cap\{u=0\}\cong\mathbb{R}^{4}$).

In view of this, one may ask whether there is a relation between the curvature locus of $M^3_{\reg}\subset\mathbb{R}^{3+k}$ at $p$ and the curvature ellipse of $M^2_{\reg}$ at $p$ (resp. the curvature locus of $M^3_{\sing}\subset\mathbb{R}^5$ at $p$ and the curvature parabola of $M^2_{\sing}$ at the same point). The answer to this questions is yes in both cases. Nevertheless, the cases will be treated separately, since the proof of the singular case is more delicate.

\begin{teo}\label{regsections}
Let $M^3_{\reg}\subset\mathbb{R}^{3+k}$, $k\geq1$, a regular $3$-manifold and $p\in M^3_{\reg}$. The curvature locus of $M^3_{\reg}$
at $p$ is generated by the union of the curvature ellipses at $p$ of the regular surfaces in $\mathbb{R}^{2+k}$ given by the normal sections along the tangent directions of $M^3_{\reg}$.
\end{teo}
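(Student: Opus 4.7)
The plan is to identify the curvature ellipse of each normal section with the image, under the ambient second fundamental form $II_p$ of $M^3_{\reg}$, of a great circle of $\mathbb{S}^2\subset T_pM^3_{\reg}$, and then to check that these great circles sweep out the whole sphere.

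First I would fix a unit direction $u\in\mathbb{S}^2\subset T_pM^3_{\reg}$ and verify that the hyperplane $\{u=0\}\subset\mathbb{R}^{3+k}$ cuts $M^3_{\reg}$ transversally at $p$ (the restriction of $\langle\cdot,u\rangle$ to $T_pM^3_{\reg}$ is nonzero because $\langle u,u\rangle=1$), so that the normal section $M^2_{\reg}$ is indeed a regular surface with tangent plane $u^\perp\cap T_pM^3_{\reg}$. The key identification is that, inside the ambient $\{u=0\}\cong\mathbb{R}^{2+k}$, one has $N_pM^2_{\reg}=N_pM^3_{\reg}$: indeed $N_pM^3_{\reg}\subset u^\perp$ because $u\in T_pM^3_{\reg}$, and $N_pM^3_{\reg}\perp T_pM^2_{\reg}\subset T_pM^3_{\reg}$, so a dimension count ($k=(2+k)-2$) forces equality.

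Next, for a unit $v\in T_pM^2_{\reg}$ pick any smooth curve $\gamma\subset M^2_{\reg}$ with $\gamma(0)=p$ and $\gamma'(0)=v$. Viewing $\gamma$ also as a curve in $M^3_{\reg}$, its acceleration decomposes as $\gamma''(0)=II_p(v,v)+\tau$ with $\tau\in T_pM^3_{\reg}$. Projecting onto $N_pM^2_{\reg}=N_pM^3_{\reg}$ kills $\tau$, so the second fundamental form of the normal section agrees with $II_p$ on $T_pM^2_{\reg}\times T_pM^2_{\reg}$. Consequently, the curvature ellipse of $M^2_{\reg}$ at $p$ is the image under $II_p$ of the great circle $\{v\in T_pM^3_{\reg}:|v|=1,\,v\perp u\}\subset\mathbb{S}^2$.

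To finish, every $v\in\mathbb{S}^2\subset T_pM^3_{\reg}$ belongs to some such great circle (any $u\in\mathbb{S}^2\cap v^\perp$ works), so these great circles cover $\mathbb{S}^2$ as $u$ varies. Since by definition $\Delta_v=II_p(\mathbb{S}^2)$, the curvature locus equals the union of the curvature ellipses of all normal sections, which is the assertion. The only genuinely delicate point is the identification $N_pM^2_{\reg}=N_pM^3_{\reg}$; once that is in place, the agreement of the two second fundamental forms on $T_pM^2_{\reg}$ is automatic, and the remainder is an elementary covering argument on $\mathbb{S}^2$.
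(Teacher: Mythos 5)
Your proof is correct and follows essentially the same route as the paper's: both rest on the observation that the second fundamental form of the normal section is the restriction of $II_p$ of $M^3_{\reg}$ to $T_pM^2_{\reg}$, so each curvature ellipse is the image of a great circle of $\mathbb{S}^2\subset T_pM^3_{\reg}$, and these great circles cover the sphere. The only difference is presentational: the paper works in Monge form and substitutes $Z=\beta_1X+\beta_2Y$ explicitly, whereas you argue coordinate-free and, usefully, make explicit the identification $N_pM^2_{\reg}=N_pM^3_{\reg}$ (via the dimension count inside $u^\perp$) that the paper's computation leaves implicit.
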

\begin{proof}
Assume, without loss of generality, that $p$ is the origin. Take a parametrisation of $M^3_{\reg}$
in the Monge form
$f:(\mathbb{R}^3,0)\rightarrow(\mathbb{R}^{3+k},0)$, with
$$f(x,y,z)=(x,y,z,f_1(x,y,z),\ldots,f_k(x,y,z)),$$
and $f_i\in\mathcal{M}_{3}^2$, for $i=1,\ldots,k$. Take coordinates $(X,Y,Z,W_1,\ldots,W_k)$
in $\mathbb{R}^{3+k}$.

Let $u\in\mathbb{S}^2\subset T_pM^3_{\reg}$ be a non-zero vector and $\alpha_1,\alpha_{2},\alpha_3\in\mathbb{R}$ not all zero such that $u=\alpha_1X+\alpha_2Y+\alpha_3Z$ and $\alpha_1^2+\alpha_2^2+\alpha_3^2=1$. Consider the normal section given by the $(2+k)$-space generated by $\{\alpha_1X+\alpha_2Y+\alpha_3Z=0\}$. Suppose $\alpha_3\neq0$. Hence,
$Z=-\frac{\alpha_1}{\alpha_3}X-\frac{\alpha_2}{\alpha_3}Y=\beta_1X+\beta_2Y$,
where $\beta_1,\beta_2\in\mathbb{R}$.

The regular surface $M^2_{\reg}=M^3_{\reg}\cap\{Z=\beta_1X+\beta_2Y\}$ in $\mathbb{R}^{3+k}$ (although it is contained in a copy of $\mathbb{R}^{2+k}$) is locally given by
$$f(x,y)=(x,y,\beta_1x+\beta_2y,f_1(x,y,\beta_1x+\beta_2y),\ldots,f_k(x,y,\beta_1x+\beta_2y)).$$

The tangent plane $T_pM^2_{\reg}$ is such that its subset of unit vectors $\mathbb{S}^1$ is also a subset of $\mathbb{S}^2\subset T_{p}M^3_{\reg}$, since the curvature locus of $M^3_{\reg}$ is the image, via second fundamental form, of $\mathbb{S}^{2}$ and its restriction to $T_{p}M^2_{\reg}$ is precisely the second fundamental form of $M^2_{\reg}$ at $p$. Therefore the curvature ellipse of $M^2_{\reg}$ at $p$ is contained in the curvature locus of $M^3_{\reg}$ at $p$.

Finally, varying $u$ in $\mathbb{S}^2\subset T_pM^3_{\reg}$, we obtain all possible normal sections and the corresponding unit circles $\mathbb{S}^1$ cover the sphere $\mathbb{S}^2$. Hence, the curvature locus of $M^3_{\reg}$ is given by the union of the curvature ellipses.
\end{proof}

\begin{ex}
\begin{itemize}
    \item[(i)] Consider $M^3_{\reg}\subset\mathbb{R}^6$ given by $f:(\mathbb{R}^3,0)\rightarrow(\mathbb{R}^6,0)$, $$f(x,y,z)=\left(x,y,z,\frac{\sqrt{2}}{2}xy,\frac{\sqrt{2}}{2}xz,\frac{\sqrt{2}}{2}yz\right).$$ At the origin $p$, its curvature locus is a Roman Steiner surface.
    The normal sections given by $\{X=0\}$, $\{Y=0\}$ and $\{Z=0\}$, are regular surfaces whose curvature ellipses at $p$ are, respectively:
    $$
    \begin{array}{c}
    \eta_X(\theta)=(0,0,\sqrt{2}\sin(\theta)\cos(\theta)),\ \eta_Y(\theta)=(0,\sqrt{2}\sin(\theta)\cos(\theta),0),\\
    \eta_Z(\theta)=(\sqrt{2}\sin(\theta)\cos(\theta),0,0),
    \end{array}
    $$
    where $\theta\in[0,2\pi]$. In all the cases, the curvature ellipse is a segment which corresponds to the double point curve of the Roman Steiner surface. The normal sections
    $\{X=Y\}$, $\{X=Z\}$ and $\{Y=Z\}$, after changes of coordinates in the source and rotations in the tangent spaces of the surfaces in $\mathbb{R}^{5}$, provide us, respectively, the following curvature ellipses:
    $$
    \begin{array}{c}
         \eta_{XY}(\theta)=(\frac{\sqrt{2}}{2}\sin(\theta)^2,\sin(\theta)\cos(\theta),\sin(\theta)\cos(\theta)),\\ \eta_{XZ}(\theta)=(\sin(\theta)\cos(\theta),\frac{\sqrt{2}}{2}\sin(\theta)^2,\sin(\theta)\cos(\theta))  \\
          \eta_{YZ}(\theta)=(\sin(\theta)\cos(\theta),\sin(\theta)\cos(\theta),\frac{\sqrt{2}}{2}\sin(\theta)^2),
    \end{array}
    $$
    where $\theta\in[0,2\pi]$. This time, all curves are non degenerate ellipses. Figure \ref{steiner-folhas} shows the curvature ellipses on the Roman Steiner surface. It seems Steiner himself already knew how to generate the Roman surface by ellipses (see \cite{Apery}). However, all his ellipses pass through a ``pole'' whereas all of the ellipses obtained here pass through the triple point.
    \begin{figure}[h!]
\begin{center}
\includegraphics[scale=0.2]{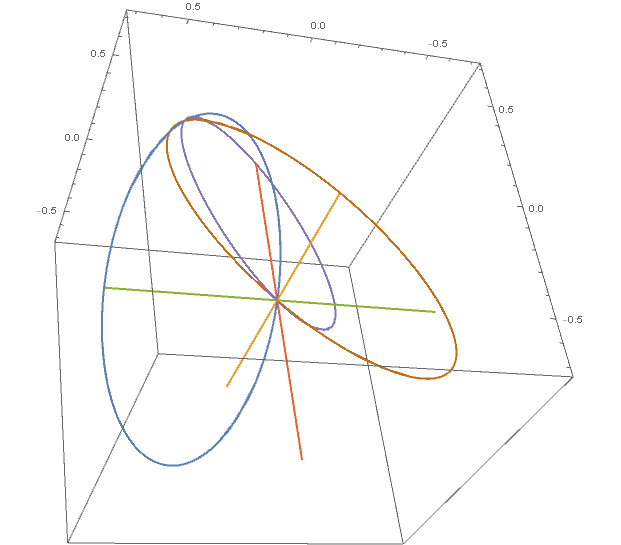}
\caption{Curvature ellipses on the Roman Steiner surface.}
\label{steiner-folhas}
\end{center}
\end{figure}
\item[(ii)] Consider $M^3_{\reg}\subset\mathbb{R}^5$ given by $f(x,y,z)=(x,y,z,x^2+z^2,xy)$. Taking coordinates $(X,Y,Z,W,T)$ in $\mathbb{R}^5$, its curvature locus at the origin $p$ is an elliptic region contained in the normal plane $\{W,T\}$, with center at $(1,0)$ and radius $1$.
\begin{table}[h]
\caption{Curvature ellipses on the elliptic region.}
\centering
{\begin{tabular}{clr}
\hline
Normal section & Parametrisation of the curvature ellipse & Type\\
\hline
$\{X=0\}$ & $(2\sin(\theta)^2,0)$  & segment \cr
$\{Y=0\}$ & $(2,0)$ & point \cr
$\{Z=0\}$ & $(2\sin(\theta)^2,2\sin(\theta)\cos(\theta))$ &  circle \cr
$\{X=Z\}$ & $(2\sin(\theta)^2,\frac{2}{\sqrt{2}}\sin(\theta)\cos(\theta))$ & ellipse \cr
$\{Y=Z\}$ & $(2\sin(\theta)^2+\cos^2(\theta),\frac{2}{\sqrt{2}}\sin(\theta)\cos(\theta))$ & ellipse \\
$\{X=Y\}$ & $(\sin(\theta)^2+2\cos(\theta)^2,\sin(\theta)^2)$ & segment \cr
\hline
\end{tabular}
}
\label{tab:folheacoes}
\end{table}
Table \ref{tab:folheacoes}, shows some curvature ellipses of regular surfaces given by normal sections. Here,
$\theta\in[0,2\pi]$. Figure \ref{elipses-folhas} shows the curves in Table \ref{tab:folheacoes}.
\begin{figure}[h!]
\begin{center}
\includegraphics[scale=0.3]{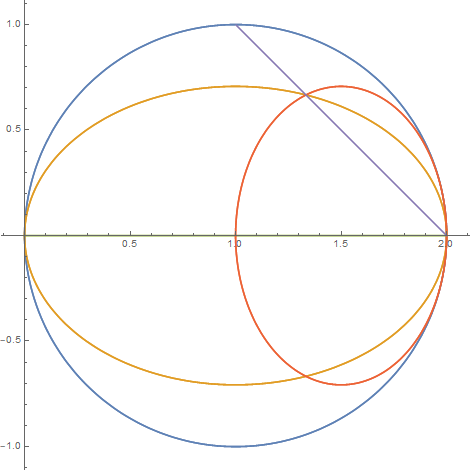}
\caption{Curvature ellipses on the elliptic region.}
\label{elipses-folhas}
\end{center}
\end{figure}
\end{itemize}
\end{ex}

Although it was known that the Roman Steiner surface could be generated by ellipses, geometrically speaking this is not so obvious for the cross-cap, the Steiner Type 5 or the cross-cup surface.

\begin{teo}\label{singsections}
Let $M^3_{\sing}\subset\mathbb{R}^{5}$ a singular corank $1$ $3$-manifold. The curvature locus of $M^3_{\sing}$
at $p$ is generated by the union of the curvature parabolas at $p$ of the singular surfaces in $\mathbb{R}^{4}$ given by the normal sections along the tangent directions of $M^3_{\sing}$.
\end{teo}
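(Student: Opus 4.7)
The plan is to follow the argument of Theorem~\ref{regsections} but with extra care, because $T_pM^3_{\sing}$ is only two-dimensional, the set $C_q$ of unit tangent vectors in the source is a cylinder rather than a sphere, and the normal section itself is a corank~$1$ singular surface. I would first put $f$ in the standard form of Subsection~2.4, namely $f(x,y,z) = (x, y, f_1, f_2, f_3)$ with all first partial derivatives of the $f_i$ vanishing at the origin. Then, writing $e_1,\dots,e_5$ for the standard basis of $\mathbb{R}^5$, one has $T_pM^3_{\sing} = \langle e_1, e_2\rangle$ and $N_pM^3_{\sing} = \langle e_3, e_4, e_5\rangle$. For a unit tangent direction $u = \alpha_1 e_1 + \alpha_2 e_2$, the preimage of the hyperplane $\{u=0\}$ in the source is the plane $\{\alpha_1 x + \alpha_2 y = 0\}$, so the normal section is parametrised by $g(s,t) = f(-\alpha_2 s, \alpha_1 s, t)$. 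A rotation in the $(e_1,e_2)$-plane of the target sending $-\alpha_2 e_1 + \alpha_1 e_2$ to $e_1$ then puts $g$ into the standard form $\tilde g(s,t) = (s, \tilde f_1, \tilde f_2, \tilde f_3)$ of a corank~$1$ germ $(\mathbb{R}^2,0)\to(\mathbb{R}^4,0)$, with $\tilde f_i(s,t) = f_i(-\alpha_2 s, \alpha_1 s, t)$.

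The structural observation underlying the proof is that the normal spaces of the section and of the 3-manifold coincide as subspaces of $\mathbb{R}^5$. Indeed, $dg_0(\partial_s) = -\alpha_2 e_1 + \alpha_1 e_2$ and $dg_0(\partial_t) = 0$, so the tangent line $T_pM^2_{\sing}$ is a line inside $T_pM^3_{\sing}$, and its orthogonal complement inside the hyperplane $\{u=0\}$ is exactly $\langle e_3,e_4,e_5\rangle = N_pM^3_{\sing}$. Consequently both curvature loci live in the same ambient $3$-space and can be computed with respect to the same orthonormal normal frame $\{\nu_1,\nu_2,\nu_3\}$.

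I would then apply the chain rule to $\tilde g = f\circ\iota$ with the linear map $\iota(s,t) = (-\alpha_2 s, \alpha_1 s, t)$ to express the coefficients of the section's second fundamental form in terms of those of the 3-manifold: $l_{\nu_i}^{(2)} = \alpha_2^2\, l_{\nu_i}^{(3)} - 2\alpha_1\alpha_2\, m_{\nu_i}^{(3)} + \alpha_1^2\, n_{\nu_i}^{(3)}$, $m_{\nu_i}^{(2)} = -\alpha_2\, q_{\nu_i}^{(3)} + \alpha_1\, r_{\nu_i}^{(3)}$, and $n_{\nu_i}^{(2)} = p_{\nu_i}^{(3)}$. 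Substituting these into the parabola parametrisation $\eta^{(2)}(t) = \sum_i(l_{\nu_i}^{(2)} + 2t\, m_{\nu_i}^{(2)} + t^2\, n_{\nu_i}^{(2)})\nu_i$ and comparing with the parametrisation of $\Delta_{cv}$ recalled in Subsection~2.4, I would recognise the identity $\eta^{(2)}(t) = \eta^{(3)}(-\alpha_2,\alpha_1,t)$. Hence the curvature parabola of the section is precisely the image under $\eta^{(3)}$ of the ruling $\{(-\alpha_2,\alpha_1,t):t\in\mathbb{R}\}$ of the cylinder $C_q=\{x^2+y^2=1\}$; the antipodal ruling $s=-1$ produces the same image after the reparametrisation $t\mapsto -t$.

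To finish, as $u$ varies over the unit circle in $T_pM^3_{\sing}$, the orthogonal vectors $(-\alpha_2,\alpha_1)$ sweep out the full unit circle in the $(x,y)$-plane, and the associated rulings exhaust the cylinder $C_q$. Therefore the union of the curvature parabolas of the normal sections equals $\eta^{(3)}(C_q) = \Delta_{cv}$, as claimed. The step I expect to be the main obstacle, and the reason this case warrants separate treatment from the regular one, is the combined identification $N_pM^2_{\sing} = N_pM^3_{\sing}$ together with the correct handling of the null limit direction $y_\infty$ of each section: it must be matched with the null tangent $\partial_z$ of the 3-manifold, so that the limit behaviour of each parabola is consistent with the direction along the rulings of $C_q$ and no stray points are produced outside $\Delta_{cv}$.
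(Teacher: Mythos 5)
Your proof is correct and follows essentially the same route as the paper's: both identify each normal section with a pair of rulings of the unit cylinder $C_q$ (the intersection of $C_q$ with the plane in $T_q\tilde{M}$ containing $\ker(dg_q)$ determined by the section) and observe that the restriction of the second fundamental form of $M^3_{\sing}$ to that plane is the second fundamental form of the sectioned surface, so that the resulting parabolas sweep out $\Delta_{cv}$ as the section varies. You merely make explicit, via the chain rule and the identity $\eta^{(2)}(t)=\eta^{(3)}(-\alpha_2,\alpha_1,t)$, what the paper states conceptually.
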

\begin{proof}
Consider $w\in T_pM^3_{\sing}$ a non zero vector. Here,
$(dg_q)^{-1}(w)\subset T_q\tilde{M}$ is a plane which contains the subset $\ker(dg_q)$, where $g$ is the corank $1$ map at $q$ used in the initial construction, where $g(q)=p$.

Hence, the subset $C'_q=(dg_q)^{-1}(w)\cap C_q$ is a pair of lines contained in the unit cylinder $C_q$ and such that $\eta_q(C'_q)$ is the curvature parabola at $p$ of the singular surface contained in the $4$-space given by the normal section $\{w=0\}$. Besides, the curvature parabola is a subset of the curvature locus of $M^3_{\sing}$. The second fundamental form of $M^3_{\sing}$ restricted to $(dg_q)^{-1}(w)\subset T_q\tilde{M}$ is precisely the second fundamental form of the singular surface $M^2_{\sing}\subset\mathbb{R}^4$. Figure \ref{fig.dem3var} shows the previous construction.
\begin{figure}[h!]
\begin{center}
\includegraphics[scale=0.4]{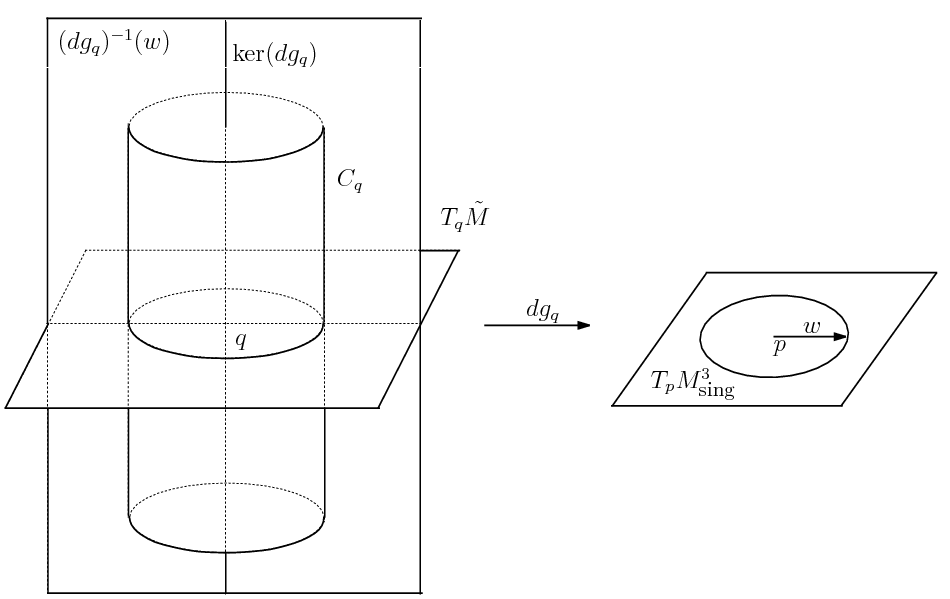}
\caption{Theorem \ref{singsections}.}
\label{fig.dem3var}
\end{center}
\end{figure}

Varying $w\in T_pM^3_{\sing}$, we obtain the cylinder $C_q$ in $T_q\tilde{M}$, therefore, the curvature locus of the $3$-manifold: since each normal section induces two lines which cover the cylinder when varying the normal section, the curvature locus of $M^3_{\sing}$ at $p$ is generated by the reunion of these curves.
\end{proof}

\begin{ex}\label{ex.sing}
\begin{itemize}
    \item[(i)] Let $M^3_{\sing}\subset\mathbb{R}^5$ be the singular $3$-manifold at the origin $p$ locally given by
    $f(x,y,z)=(x,y,x^{2}-2 yz,y^2-2 xz,z^2-2 xy)$
    whose curvature locus $\Delta_{cv}$ at $p$ is
    $$\{(2\alpha^2-4\beta\gamma,2\beta^2-4\alpha\gamma,2\gamma^2-4\alpha\beta)\in N_pM^3_{\sing}:\ \alpha^2+\beta^2=1\},$$
     \begin{figure}[!htb]
\begin{minipage}[b]{0.4\linewidth} \hspace{1.cm}\vspace{-0.5cm}
\includegraphics[width=\linewidth]{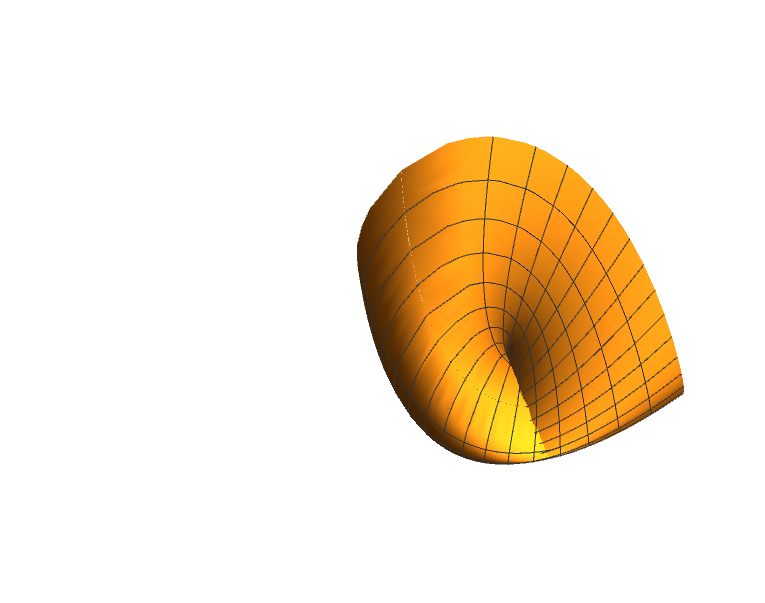}
\end{minipage} \hfill
\begin{minipage}[b]{0.4\linewidth} \hspace{-2.5cm}
\includegraphics[width=\linewidth]{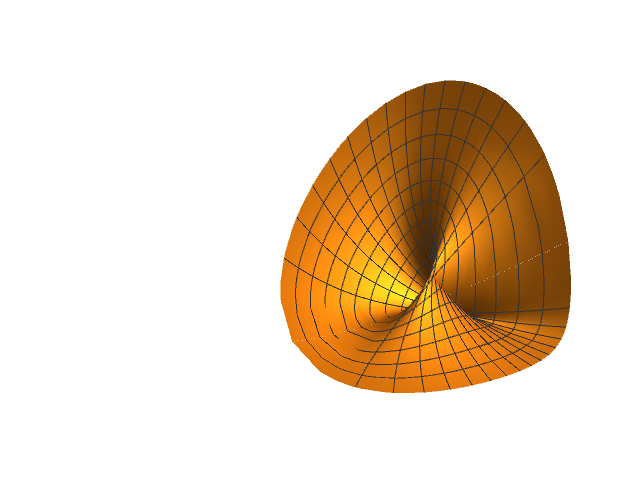}
\end{minipage}
\caption{Side and top views of $\Delta_{cv}$}
\label{Locus-sing}
\end{figure}
showed in Figure \ref{Locus-sing}.
    The normal section given by $\{X=0\}$, is the corank $1$ surface $\bar{f}(x,y)=(y,-2yz,y^2,z^2)$ and its curvature parabola at $p$ is  parametrised by $\bar{\eta}(z)=(-4z,2,2z^2)$. The normal section $\{Y=0\}$, parametrised by $\tilde{f}(x,z)=(x,x^2,-2xz,z^2)$ is such that its curvature parabola is also a non degenerated parabola,
    $\tilde{\eta}(z)=(2,-4z,2z^2)$. Taking the normal section $\{X+aY=0\}$, where $a\neq0$, after changes of coordinates in the source and isometries in the target, we obtain the singular surface given by
$$\left(0,y,\frac {  a^2\sqrt{a^2+1}y^2-2(a^2+1)yz }{
 \left( {a}^{2}+1 \right) ^{3/2}},\frac {
 \sqrt{a^2+1}y^2+2a(a^2+1)yz }{ \left( {a}^{2}
+1 \right) ^{3/2}},\frac {({a}^{2}+1){z}^{2}+2\,a{y}
^{2}}{{a}^{2}+1}\right),$$
and $\Delta_{cv}$ is parametrised by
$$\eta_a(z)=\left(\frac{2a^2\sqrt{a^2+1}-4(a^2+1)z}{( {a}^{2}+1)^{3/2}},\frac{2\sqrt{a^2+1}+4a(a^2+1)z}{( {a}^{2}+1)^{3/2}},\frac{4a+2(a^2+1)z^2}{a^2+1}\right),$$
a non degenerate parabola for $a\in\mathbb{R}$. Figure \ref{folhas1}, shows some of the curvature parabolas in the curvature locus.
\begin{figure}[h!]
\begin{center}
\includegraphics[scale=0.35]{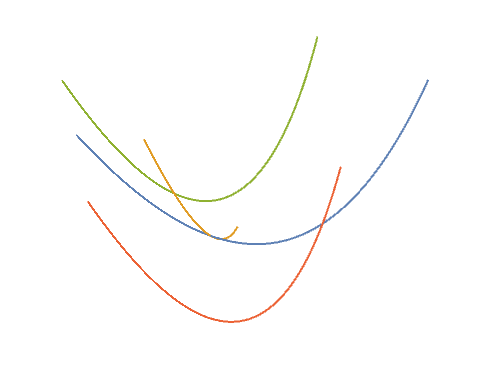}
\caption{Curvature parabolas.}
\label{folhas1}
\end{center}
\end{figure}
\item[(ii)] Let $M^3_{\sing}\subset\mathbb{R}^5$ be locally parametrised by $f(x,y,z)=(x,y,z^2,xz,0)$.
The curvature locus $\Delta_{cv}$ at the origin $p$ is the subset
$$\{(2\gamma^2,2\alpha\gamma,0)\in N_pM^3_{\sing};\ \alpha^2+\beta^2=1\},$$
a planar parabolic region, as in Figure \ref{regiao}.
\begin{figure}[h!]
\begin{center}
\includegraphics[scale=0.3]{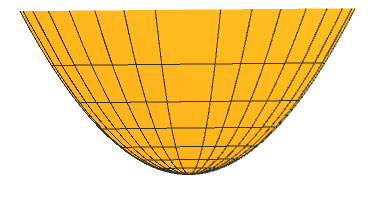}
\caption{Planar parabolic region.}
\label{regiao}
\end{center}
\end{figure}
The normal section given by $\{X=0\}$ is the corank $1$ surface
parametrised by $\bar{f}(y,z)=(y,z^2,0,0)$, whose curvature parabola is the half-line $\bar{\eta}(z)=(2z^2,0,0)$. The remaining normal sections given by $\{Y+aX=0\}$, where $a\in\mathbb{R}$ are corank $1$ surfaces parametrised by $\tilde{f}_a(x,z)=(x,-ax,z^2,xz,0)$, that can be written (after suitable change of coordinates, as before) as
$(x,z)\mapsto\left(0,x,z^2,\frac{\sqrt{a^2+1}}{a^2+1}xz\right)$.
The curvature parabolas of those surfaces are parametrised by $\eta_a(z)=(2z^2,\frac{2\sqrt{a^2+1}}{a^2+1}z,0)$, and for all $a\in\mathbb{R}$, their traces are non degenerate parabolas. 
\end{itemize}
\end{ex}

The curvature parabola's topological type of a corank $1$ surface $M^2_{\sing}\subset\mathbb{R}^n$, $n=3,4$ is a complete invariant
for the $\mathcal{A}^2$-classification of $2$-jets in $\Sigma^1J^2(2,n)$, as shown in \cite{Benedini/Sinha/Ruas,MartinsBallesteros}. The curvature locus of a corank $1$
$3$-manifold in $\mathbb{R}^5$ does not have the same property: the curvature locus of the $3$-manifold given by $g(x,y,z)=(x,y,xz,yz,z^2)$ at the origin $p$ is the paraboloid $\Delta_p=\{(0,0,2ac,2bc,2c^2)\mid a^2+b^2=1\}$, but the $3$-manifold given by $f(x,y,z)=(x,y,xz+y^2,yz,z^2)$, which satisfies $j^2f(0)\sim_{\mathcal{A}^2} (x,y,xz,yz,z^2)$ has the curvature locus at the origin $\Delta_{cv}=\{(0,0,2b^2+2ac,2bc,2c^2)\,|\,a^2+b^2=1\}$, which is not a paraboloid, as shown in \cite{BenediniRuasSacramento}.
However, the topological type of the curvature parabolas
of the normal sections gives necessary conditions for the $\mathcal{A}^2$-orbits of the $3$-manifold's parametrisation.

\begin{teo}\label{necessaryorbits}
Let $M^3_{\sing}\subset\mathbb{R}^{5}$ be a corank $1$ $3$-manifold at
$p\in M^3_{\sing}$. We assume $p$ the origin and denote by $j^{2}f(0)$ the $2$-jet of a local
parametrisation $f:(\mathbb{R}^{3},0)\rightarrow(\mathbb{R}^{5},0)$ of $M^3_{\sing}$. The following holds:
\begin{itemize}
\item[(i)] $j^{2}f(0)\sim_{\mathcal{A}^{2}}(x,y,xz,yz,z^2)\Rightarrow\Delta_{cv}$ is generated exclusively by non degenerate parabolas;
\item[(ii)] $j^{2}f(0)\sim_{\mathcal{A}^{2}}(x,y,z^2,xz,0)\Rightarrow\Delta_{cv}$ is generated by non degenerate parabolas and a half-line;
\item[(iii)] $j^{2}f(0)\sim_{\mathcal{A}^{2}}(x,y,xz,yz,0)\Rightarrow\Delta_{cv}$ is generated exclusively by lines;
\item[(iv)] $j^{2}f(0)\sim_{\mathcal{A}^{2}}(x,y,z^2,0,0)\Rightarrow\Delta_{cv}$ is generated exclusively by half-lines;
\item[(v)] $j^{2}f(0)\sim_{\mathcal{A}^{2}}(x,y,xz,0,0)\Rightarrow\Delta_{cv}$ is generated by lines and a point;
\item[(vi)] $j^{2}f(0)\sim_{\mathcal{A}^{2}}(x,y,0,0,0)\Rightarrow\Delta_{cv}$ is generated exclusively by points.
\end{itemize}
\end{teo}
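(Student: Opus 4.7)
The plan is to invoke Theorem \ref{singsections} in order to reduce each implication to a direct computation on the normal forms of Proposition \ref{prop:orbitas}. Since every implication concerns only the 2-jet $j^2f(0)$ and since the topological type of the curvature parabola of a corank $1$ surface in $\mathbb{R}^4$ depends only on the $\mathcal{A}^2$-orbit of its $2$-jet by \cite{Benedini/Sinha/Ruas,MartinsBallesteros}, it is enough to verify each implication with $f$ replaced by its orbit representative.

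Given a nonzero tangent direction $w=\alpha\partial_x+\beta\partial_y\in T_pM^3_{\sing}$ with $\alpha^2+\beta^2=1$, the proof of Theorem \ref{singsections} identifies the curvature parabola of the normal section along $w$ as the image of the restriction of the second fundamental form $II$ of $M^3_{\sing}$ to the $2$-plane $\{(a,b,c)\in T_q\tilde M:\alpha a+\beta b=0\}$, intersected with the cylinder $C_q$. Parametrising this intersection by $c\mapsto(-\beta,\alpha,c)$ one obtains
\begin{equation*}
\begin{aligned}
\eta_w(c)\,=\,{}& \beta^2\,II(\partial_x,\partial_x)-2\alpha\beta\,II(\partial_x,\partial_y)+\alpha^2\,II(\partial_y,\partial_y) \\
& {}-2\beta c\,II(\partial_x,\partial_z)+2\alpha c\,II(\partial_y,\partial_z)+c^2\,II(\partial_z,\partial_z).
\end{aligned}
\end{equation*}

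The proof then amounts to substituting each of the six normal forms into this formula and reading off the topological type of $\eta_w$ as $(\alpha,\beta)$ varies on $S^1$. For (i) the formula yields $\eta_w(c)=(0,0,-2\beta c,2\alpha c,2c^2)$, whose velocity and acceleration at $c=0$ are linearly independent for every $(\alpha,\beta)\ne(0,0)$, so every normal section is a non-degenerate parabola. For (ii) it gives $\eta_w(c)=(0,0,2c^2,-2\beta c,0)$, a non-degenerate parabola when $\beta\ne 0$ and the half-line $(0,0,2c^2,0,0)$ when $w=\partial_x$. The remaining items are analogous: (iii) yields the family of lines $(0,0,-2\beta c,2\alpha c,0)$, one per direction $(\alpha,\beta)$; (iv) returns the half-line $(0,0,2c^2,0,0)$ independently of $(\alpha,\beta)$; (v) produces the line $(0,0,-2\beta c,0,0)$ for $\beta\ne 0$ and collapses to the origin when $w=\partial_x$; and in (vi) all coefficients vanish, so $\eta_w\equiv 0$.

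The main obstacle is not conceptual but the careful bookkeeping of the tangent directions along which $\eta_w$ drops rank. These exceptional directions are precisely what produce the ``half-line'' ingredient in (ii) and the ``point'' ingredient in (v); in (i), (iii), (iv) and (vi) the topological type of $\eta_w$ is the same for every $(\alpha,\beta)\in S^1$, which is exactly why the word ``exclusively'' appears in those four items of the statement.
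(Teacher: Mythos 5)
Your computational core is fine: the parametrisation $\eta_w(c)$ of the normal section along $w=(\alpha,\beta)$ is correct, and substituting the six orbit representatives does give exactly the lists of topological types in the statement. The gap is in the very first step, the reduction ``it is enough to verify each implication with $f$ replaced by its orbit representative.'' The justification you offer --- that the topological type of the curvature parabola of a corank $1$ surface in $\mathbb{R}^4$ is an $\mathcal{A}^2$-invariant of \emph{that surface's} $2$-jet --- addresses the wrong object. What you would need is that the $2$-jets of the normal sections of $f$ lie in the same $\mathcal{A}^2$-orbits of $\Sigma^1J^2(2,4)$ as those of the normal sections of the representative, and this is not automatic: the target diffeomorphisms realising $j^2f(0)\sim_{\mathcal{A}^2}(x,y,xz,yz,z^2)$ are not isometries, so they do not preserve the normal space $N_pM^3_{\sing}$, hence neither the hyperplanes $\{w=0\}$ defining the normal sections nor the second fundamental form. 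The paper warns about exactly this immediately before the theorem: $(x,y,xz+y^2,yz,z^2)$ is $\mathcal{A}^2$-equivalent to $(x,y,xz,yz,z^2)$ but its curvature locus is not a paraboloid, so second order geometry is \emph{not} constant along $\mathcal{A}^2$-orbits. Checking only the bare representative therefore proves nothing about a general member of the orbit; that is precisely where all the work of the theorem lies.

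The paper closes this gap by working not with the $\mathcal{A}^2$-representative but with the $\mathcal{R}^2\times\mathcal{O}(5)$-normal forms of \cite{BenediniRuasSacramento}: every germ with $j^2f(0)\sim_{\mathcal{A}^2}(x,y,xz,yz,z^2)$ is equivalent, via source diffeomorphisms and target \emph{isometries} (which do preserve normal sections and curvature parabolas), to a form $(x,y,a_1x^2+a_3y^2+xz+a_6yz,\,b_1x^2+b_2xy+b_3y^2+b_6yz,\,c_1x^2+\dots+c_6yz)$ with $c_4>0$, $b_6\neq0$. One then computes the $2$-jet of every normal section of this general form, with all the free coefficients present, and checks it is $\mathcal{A}^2$-equivalent to $(x,xz,z^2,0)$ because the coefficient of $z^2$ survives; only then does Theorem 3.6 of \cite{Benedini/Sinha/Ruas} apply. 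To repair your argument you would either have to redo your computation on these parameter-laden normal forms (which is essentially the paper's proof), or supply a separate argument that the multiset of $\mathcal{A}^2$-orbits of the $2$-jets of the normal sections is itself an $\mathcal{A}^2$-invariant of $j^2f(0)$; the latter is not obvious and is not in your proposal.
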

\begin{proof} Since the proofs of all cases are similar, we shall present only the first case.
In \cite{BenediniRuasSacramento}, the authors proved that if $j^{2}f(0)\sim_{\mathcal{A}^{2}}(x,y,xz,yz,z^2)$, then
$f$ is $\mathcal{R}^2\times\mathcal{O}(5)$-equivalent to
    $$
\begin{array}{cl}
   (x,y,z)\mapsto  & (x,y,a_1x^2+a_3y^2+xz+a_6yz,b_1x^2+b_2xy+b_3y^3+b_6yz, \\
     & c_1x^2+c_2xy+c_3y^2+c_4z^2+c_5xz+c_6yz),
\end{array}
$$
where $c_4>0$ and $b_6\neq0$. Here, $\mathcal{R}^2$ denotes the group of $2$-jets of diffeomorphisms from $(\mathbb{R}^3,0)$ to $(\mathbb{R}^3,0)$ and $\mathcal{O}(5)$ is the group of linear isometries of $\mathbb{R}^5$.

Consider the normal section given by $\{Y+\alpha X=0\}$, where $\alpha\in\mathbb{R}-\{0\}$, locally parametrised by
$$
\begin{array}{cl}
    (x,z)\mapsto & (x,-\alpha x,a_{3}\,{\alpha}^{2}{x}^{2}-a_{6}\,\alpha\,xz+a_{1
}\,{x}^{2}+xz,-\alpha\,b_{2}\,{x}^{2}+\alpha\,b_{3
}\,{x}^{2}-\alpha\,b_{6}\,xz+b_{1}\,{x}^{2},  \\
     & c_{3}
\,{\alpha}^{2}{x}^{2}-c_{2}\,\alpha\,{x}^{2}-c_{6}\,\alpha\,xz+c_{1}\,
{x}^{2}+c_{4}\,{z}^{2}+c_{5}\,xz)
\end{array}
$$
By a rotation of angle $\theta=\arctan(\alpha)$ in the target
and the change of coordinates in the source,
$(x,z)\mapsto(\frac{\sqrt{\alpha^2+1}x}{\alpha^2+1},z)$ we obtain,
$$
\begin{array}{l}
    (x,z)\mapsto \left(x,0,\frac{\sqrt{\alpha^2+1}(\alpha^2 a_3+a_1)x^2+(\alpha^2+1)(1-a_6\alpha)xz}{(\alpha^2+1)^{3/2}},-\frac{\sqrt{\alpha^2+1}(b_2\alpha-b_3\alpha-b_1)x^2+b_6\alpha(\alpha^2+1)xz}{(\alpha^2+1)^{3/2}}\right.,  \\
      \left.\frac{\sqrt{\alpha^2+1}(c_3\alpha^2-c_2\alpha+c_1)x^2+(\alpha^2+1)(c_5-\alpha c_6)xz+c_4\sqrt{\alpha^2+1}(\alpha^2+1)z^2}{(\alpha^2+1)^{3/2}}\right).
\end{array}
$$
The parametrisation of the normal section in the $4$-space $XZWT$, is such that its $2$-jet is $\mathcal{A}^2$-equivalent to $(x,xz,z^2,0)$, since the coefficient of $z^2$ is not zero. Hence, by Theorem 3.6 in \cite{Benedini/Sinha/Ruas}, the curvature parabola of the normal section is a non degenerate parabola for all $\alpha\neq0$. Finally, the normal sections given by $\{X=0\}$ and $\{Y=0\}$ are singular surfaces parametrised, respectively, by
$$(y,z)\mapsto(0,y,a_{3}y^{2}+a_{6}yz,b_{3}{y}^{2}+b_{6}yz,c_{3}y^{2}+c_{4}z^{2}+c_{6}yz),$$
and
$$(x,z)\mapsto(x,0,a_{1}x^{2}+xz,b_{1}x^{2},c_{1}x^{2}+c_{4}z^{2}+c_{5}xz),$$
and the $2$-jets of both of them are $\mathcal{A}^2$-equivalent to $(x,xy,y^2,0)$. Once again, by Theorem 3.6 in \cite{Benedini/Sinha/Ruas}, the
curvature parabolas are non degenerate parabolas. Therefore, $\Delta_{cv}$ is obtained exclusively by non degenerate parabolas.
\end{proof}

The converse of Theorem \ref{necessaryorbits}, nevertheless, is not true. The curvature locus of $M^3_{\sing}$ given by $f(x,y,z)=(x,y,z^2,xz,0)$ at the origin $p$, as in Example \ref{ex.sing}, is a planar region that can be seen as the union of only non degenerate parabolas.


\section{Relating second order geometry through projections and normal sections}\label{diagram}

When projecting a regular $n$-manifold in $\mathbb R^{n+k}$ along a tangent direction we obtain a singular $n$-manifold in $\mathbb R^{n+k-1}$. It is natural to expect certain relations between the curvature loci of each case. For example, in \cite{BenediniOset} we showed the relation between the curvature ellipse of $M^2_{\reg}\subset\mathbb R^4$ and the curvature parabola of the projection $M^2_{\sing}\subset\mathbb R^3$ and obtained some relations between their second order geometry. It is also known that in the previous case, the tangent direction is asymptotic if and only if the singularity of the projection is worse than a crosscap (\cite{mondthesis, BruceNogueira}). Similarly, for projections from $M^2_{\reg}\subset\mathbb R^5$ to $M^2_{\sing}\subset\mathbb R^4$, the direction is asymptotic if and only if the singularity is worse than an $I_1$-singularity (\cite{Fuster/Ruas/Tari}). In the next section we will show an equivalent result for projections from $M^3_{\reg}\subset\mathbb R^6$ to $M^3_{\sing}\subset\mathbb R^5$. We will also define asymptotic directions for $M^3_{\sing}\subset\mathbb R^5$ and relate them to the asymptotic directions of $M^3_{\reg}\subset\mathbb R^6$ and $M^2_{\sing}\subset\mathbb R^4$. However, first we will justify why these type of relations are possible.

As seen in the previous section, geometrical relations between manifolds are obtained not only by projections, but also by normal sections. In order to relate both these concepts we must consider projections in a tangent direction contained in the normal section. For simplicity we fix the direction of projection and the normal section.

\begin{teo}\label{commutative}
Let $M^3_{\reg}\subset \mathbb R^6$ be given in Monge form by
$$(x,y,z)\mapsto(x,y,z,f_1(x,y,z),f_2(x,y,z),f_3(x,y,z)),$$ let $v=(0,0,1)\in T_p M^3_{\reg}$ and let $\pi_v$ be the projection along the direction $v$. Consider the normal section given by $\{Y=0\}$. Let $i_1, i_2$ be the immersions of the normal sections in $\mathbb R^6$ and $\mathbb R^5$ respectively. Let $v'=i_{1_*}^{-1}(v)=(0,1)\in T _{i_1^{-1}(p)}M^2_{reg}$. We have a commutative diagram
\begin{equation*}
\begin{CD}
M^3_{\reg}\subset\mathbb R^6 @>{\pi_v}>> M^3_{\sing}\subset\mathbb R^5\\ @A{i_1}AA        @AA{i_2}A   \\
M^2_{\reg}\subset\mathbb R^5 @>>{\pi_{v'}}> M^2_{\sing}\subset\mathbb R^4
\end{CD}
\end{equation*}
where $M^2_{\reg}=M^3_{\reg}\cap\{Y=0\}$ and $M^3_{\sing}, M^2_{\sing}$ are the corresponding singular projections, which induces a commutative diagram amongst the curvature loci of the four manifolds.
\end{teo}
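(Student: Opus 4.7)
The plan is to verify the claim in two stages: first prove commutativity at the manifold-germ level by explicit calculation with the Monge parametrisation, then use the canonical coordinates supplied by the Monge form to obtain a commutative square of curvature loci living in a common $\mathbb R^3$.

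Writing target coordinates $(X,Y,Z,W_1,W_2,W_3)$ on $\mathbb R^6$, the projection $\pi_v$ along $v=(0,0,1)$ drops the $Z$-coordinate, giving
$$M^3_{\sing}\colon(x,y,z)\mapsto(x,y,f_1,f_2,f_3)\in\mathbb R^5,$$
while the normal section $\{Y=0\}$ is the regular surface
$$M^2_{\reg}\colon(x,z)\mapsto(x,z,f_1(x,0,z),f_2(x,0,z),f_3(x,0,z))\in\mathbb R^5.$$
Since $i_{1*}(v')=v$, the projection $\pi_{v'}$ drops the (now second) $Z$-coordinate, so composing the two operations in either order yields the common parametrisation
$$M^2_{\sing}\colon(x,z)\mapsto(x,f_1(x,0,z),f_2(x,0,z),f_3(x,0,z))\in\mathbb R^4,$$
establishing the manifold-level diagram.

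Because every $f_i$ lies in $\mathcal{M}_3^2$, the first partials vanish at the origin, so the four tangent spaces are coordinate subspaces and the four normal spaces are canonically identified with a common $\mathbb R^3$ spanned by $\partial_{W_1},\partial_{W_2},\partial_{W_3}$. By Lemma \ref{lemma.curv.media} and the analogous parametrisations in Section \ref{prelim}, all four curvature loci are then described by the single quadratic map
$$\eta(a,b,c)=a^2\kappa_{xx}+b^2\kappa_{yy}+c^2\kappa_{zz}+2ab\kappa_{xy}+2ac\kappa_{xz}+2bc\kappa_{yz},$$
where each $\kappa_{ij}\in\mathbb R^3$ is the vector of second partial derivatives of $(f_1,f_2,f_3)$ at the origin. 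Only the parameter domains differ: the sphere $\{a^2+b^2+c^2=1\}$ for $M^3_{\reg}$, the cylinder $\{a^2+b^2=1,\ c\in\mathbb R\}$ for $M^3_{\sing}$, the equatorial circle $\{a^2+c^2=1,\ b=0\}$ for $M^2_{\reg}$, and the pair of lines $\{a=\pm1,\ b=0,\ c\in\mathbb R\}$ for $M^2_{\sing}$.

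One then reads off the maps in the induced square. By Theorems \ref{regsections} and \ref{singsections}, $i_1$ and $i_2$ correspond to restricting the parameter domain to $\{b=0\}$, embedding the curvature ellipse and curvature parabola into the respective $3$-manifold curvature loci. The projections $\pi_v$ and $\pi_{v'}$ correspond to real blow-ups from the sphere to the cylinder (respectively from the circle to the pair of lines): away from the poles $\pm v$ (respectively $\pm v'$), the rescaling $(a,b,c)\mapsto(a,b,c)/\sqrt{a^2+b^2}$ sends the sphere onto the cylinder, and the poles are blown up into the ends $c\to\pm\infty$, reflecting that the direction of projection becomes the null direction of the first fundamental form. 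The slice $\{b=0\}$ is preserved by this blow-up (it contains both the equator and the poles), so the induced square on the curvature loci commutes.

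The main technical obstacle is giving the blow-up interpretation a precise meaning despite the fact that, as subsets of $\mathbb R^3$, the sphere-image (typically a bounded Veronese-like surface) and the cylinder-image (typically unbounded) of the same quadratic map $\eta$ are genuinely different. The cleanest way forward is to regard the curvature loci as parametrised objects (maps out of sphere, cylinder, circle, and pair of lines into $\mathbb R^3$) rather than as point sets; the blow-up structure on the domains then lifts transparently to the images via the common formula for $\eta$, and the desired commutativity becomes a statement about parametrised loci.
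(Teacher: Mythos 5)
Your proposal is correct and follows essentially the same route as the paper's proof: an explicit check of the manifold-level square using the Monge form, followed by the observation that all four curvature loci are images of one homogeneous quadratic map $II$ restricted to the sphere, the cylinder, a great circle, and a pair of lines, with the projections realised as the rescaling (blow-up of the poles) that the paper writes in spherical coordinates as division by $\sin(\phi)$ and your version writes as $(a,b,c)\mapsto(a,b,c)/\sqrt{a^2+b^2}$. Your closing remark about treating the loci as parametrised objects rather than point sets is a fair precision that the paper handles implicitly by working with the parametrisations $\eta_e$ and $\eta_p$ throughout.
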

\begin{proof}
Consider $(X,Y,Z,W,T,S)$ to be the coordinates of $\mathbb R^6$, then $i_1$ and $i_2$ are given by $i_1(X,Z,W,T,S)=(X,0,Z,W,T,S)$ and $i_2(X,W,T,S)=(X,0,W,T,S)$. $M^2_{\reg}$ is given by $(x,z,f_1(x,0,z),f_2(x,0,z),f_3(x,0,z))$ and clearly $\pi_v\circ i_1(M^2_{\reg})=i_2\circ \pi_{v'}(M^2_{\reg})$.

Now, the curvature locus of $M^3_{\reg}$ is the image by $II$ of the unit tangent vectors in $T_p M^3_{\reg}$. We can parameterise the sphere $\mathbb{S}^2$ of unit tangent vectors in spherical coordinates by $(\theta,\phi)$, where $\theta\in[0,2\pi]$ is the azimuth (i.e. the angle from the $X$-axis in a plane of constant height) and $\phi\in[0,\pi]$ is the polar angle (i.e. the angle from the $Z$-axis). When projecting along the tangent direction $v=(0,0,1)$ we obtain a singular 3-manifold and instead of a metric we have a pseudo-metric in the tangent space. The unit tangent vectors in $T_{\pi_{v}(p)} \tilde{M}^3_{\sing}$ form a cylinder $C$ which is obtained by blowing up the north and south poles of $\mathbb{S}^2$. There is a natural map from $\mathbb{S}^2$ to $C$ which takes the spherical coordinates $(\sin(\phi)\cos(\theta),\sin(\phi)\sin(\theta),\cos(\phi))$ to the cylindrical coordinates $(\cos(\theta),\sin(\theta),\frac{\cos(\phi)}{\sin(\phi)})$ by dividing each component by $\sin(\phi)$ (i.e. it maps the point of intersection with $\mathbb{S}^2$ of a ray from the origin to the point of intersection with $C$, the north and south poles go to infinty). This map induces a relation between the parameterisations of the curvature locus of $M^3_{\reg}$ and $M^3_{\sing}$. In fact, since the loci are the image of $II$ and the coefficients of these second fundamental forms are the same in the regular and singular cases, the fact of $II$ being a quadratic homogeneous map means that if $\eta_e(\theta,\phi)$ is the parametrisation of the curvature locus of $M^3_{\reg}$, then $$\eta_p(\theta,\phi)=\frac{1}{\sin(\phi)^2}\eta_e(\theta,\phi)$$ is the parametrisation of the curvature locus of $M^3_{\sing}$.

On the other hand, the section $\{Y=0\}$ induces a section in $T_p M^3_{\reg}$. In spherical coordinates, this gives the section $\{\theta=0\}$ of $\mathbb{S}^2$. So, by Theorem \ref{regsections} the curvature ellipse of $M^2_{\reg}$ is given by $\eta_e(0,\phi)$. Similarly, by Theorem \ref{singsections} the curvature parabola of $M^2_{\sing}$ is given by $\eta_p(0,\phi)$.

It remains to see that to pass from the curvature ellipse to the curvature parabola we must divide each component of the parametrisation by $\sin(\phi)^2$. This follows from the geometrical interpretation of $\cot(\phi)=\frac{\cos(\phi)}{\sin(\phi)}$, which again shows that we must divide the components $(\sin(\phi),\cos(\phi))$ of $\mathbb{S}^1$ by $\sin(\phi)$ to get the components of the unit tangent vectors in $T_p \tilde{M}^2_{\sing}$, and the fact that the second fundamental form is a homogeneous quadratic map.
\end{proof}

\begin{rem}
In the proof of Proposition 3.8 in \cite{BenediniOset} in order to obtain the parametrisation of the curvature parabola of $M^2_{\sing}\subset \mathbb R^4$ from the parametrisation of the curvature ellipse of $M^2_{\reg}\subset \mathbb R^3$ we divide by $\cos(\phi)^2$ instead of $\sin(\phi)^2$. This is due to the fact that in the proof above, when we take the section $\{Y=0\}=\{\theta=0\}$, we are left with the $\{X,Z\}$-plane and the angle $\phi$ goes from the $Z$-axis to the $X$-axis, while the angle in the proof of Proposition 3.8 in \cite{BenediniOset} goes from the $X$-axis to the $Z$-axis
\end{rem}

\begin{ex}
Consider $M^3_{\reg}$ given by $f(x,y,z)=(x,y,z,x^2+\frac{1}{2}z^2,xz,yz)$. The projection along the tangent vector $(0,0,1)$ is $M^3_{\sing}$ given by $(x,y,x^2+\frac{1}{2}z^2,xz,yz)$, and the normal section $\{Y=0\}$ gives the regular surface $M^2_{\reg}$ given by $(x,z,x^2+\frac{1}{2}z^2,xz,0)$. The normal section of $M^3_{\sing}$, which coincides with the projection of $M^2_{\reg}$ along the tangent vector $(0,1)$, is given by $(x,x^2+\frac{1}{2}z^2,xz,0)$. The curvature locus of $M^3_{\reg}$ is a Steiner Roman surface parameterised by $$\eta_e(\theta,\phi)=(1+\sin(\phi)^2\cos(2\theta),\cos(\theta)\sin(2\phi),\sin(\theta)\sin(2\phi)),$$ and the curvature locus of $M^3_{\sing}$ is a Cylindrical Steiner surface given by $\eta_p(\theta,\phi)=$ $$\frac{1}{\sin(\theta)^2}(1+\sin(\phi)^2\cos(2\theta),\cos(\theta)\sin(2\phi),\sin(\theta)\sin(2\phi))=(2a^2+c^2,2ac,2bc),$$ where $a=\cos(\theta), b=\sin(\theta)$ and $c=\frac{\cos(\phi)}{\sin(\phi)}$, so $a^2+b^2=1$. The normal section of $M^3_{\sing}$ is given by $\{\theta=0\}=\{a=1,b=0\}$, so we get a curvature parabola $(2+c^2,2c,0)$.

On the other hand, the curvature ellipse of $M^2_{\reg}$ is parameterised by $(1+\sin(\phi)^2,\sin(2\phi),0)$ and dividing by $\sin(\phi)^2$ and changing $\frac{\cos(\phi)}{\sin(\phi)}=c$ we again obtain the curvature parabola $(2+c^2,2c,0)$.
\end{ex}

\begin{rem}
In some cases as the above example, the normal sections $M^2_{\reg}$ and $M^2_{\sing}$ can be seen in $\mathbb R^4$ and $\mathbb R^3$ respectively. In such cases we can add a line
\begin{equation*}
\begin{CD}
M^2_{\reg}\subset\mathbb R^4 @>>{\pi_{v'}}> M^2_{\sing}\subset\mathbb R^3
\end{CD}
\end{equation*} to the commutative diagram in Theorem \ref{commutative} with the corresponding immersions.
\end{rem}

Since the curvature locus contains all the second order geometry of the manifold we get

\begin{coro}
The second order geometries of $M^3_{\reg}\subset \mathbb R^6, M^2_{\reg}\subset \mathbb R^5, M^3_{\sing}\subset \mathbb R^5$ and  $M^2_{\sing}\subset \mathbb R^4$ are all related amongst each other.
\end{coro}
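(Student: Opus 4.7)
The plan is to deduce the corollary directly from Theorem \ref{commutative}, using the fact (recalled in the introduction and in Section \ref{prelim}) that the whole of the second order geometry of each of the four manifolds is encoded by its second fundamental form, equivalently by its curvature locus together with its position in the ambient normal space. So it suffices to exhibit, for any two of the four manifolds, an explicit correspondence between their curvature loci induced by the commutative square of projections and normal sections.

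First I would recall the two ``primitive'' relations that are already built into the diagram. Along the vertical arrows (the immersions $i_{1},i_{2}$), Theorems \ref{regsections} and \ref{singsections} identify the curvature loci of $M^{2}_{\reg}$ and $M^{2}_{\sing}$ as the images of $\eta_{e}$ and $\eta_{p}$ restricted to the tangent slice $\{\theta=0\}$ inside $\mathbb{S}^{2}$ and inside the cylinder $C_{q}$ respectively. Along the horizontal arrows (the projections $\pi_{v},\pi_{v'}$), the proof of Theorem \ref{commutative} gives the blow-up relation $\eta_{p}(\theta,\phi)=\sin(\phi)^{-2}\eta_{e}(\theta,\phi)$ for the 3-manifolds, and analogously for the surfaces on the bottom row after dividing by the appropriate power of $\sin$. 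Since $II$ is a homogeneous quadratic map, these two operations commute, which is precisely the content of the commutative diagram in Theorem \ref{commutative}.

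Next I would spell out what ``related'' means concretely: from any vertex of the diagram one can reach any other vertex by composing a restriction (taking the circle inside $\mathbb{S}^{2}$ or the pair of lines inside $C_{q}$ corresponding to the chosen normal section) and a blow-up (dividing the corresponding parametrisation by $\sin(\phi)^{2}$). The commutativity guarantees that the two possible routes from $M^{3}_{\reg}$ to $M^{2}_{\sing}$ (via $M^{2}_{\reg}$ or via $M^{3}_{\sing}$) produce the same curvature parabola in $N_{p}M^{2}_{\sing}$, so that the correspondences between the four loci are well defined and mutually compatible. Because the second fundamental form determines all second order invariants (normal curvature vectors of tangent sections, asymptotic directions, the types $(M)_{i}$, elliptic/hyperbolic/parabolic behaviour, and the contact types with hyperplanes captured by height functions), this compatibility of curvature loci transports any such invariant computed on one manifold to a prescribed invariant on any of the other three.

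The only delicate point, and what I would flag as the main obstacle, is to make precise the sense in which the \emph{position} of the locus inside the normal space is preserved: normal sections genuinely identify a subspace of the normal space of the ambient manifold with the full normal space of the section, whereas projections collapse one normal direction onto the kernel direction of the pseudo-metric on $T_{p}\tilde{M}_{\sing}$. Both identifications are canonical (the first is an inclusion, the second is the natural projection), so one only needs to remark that $H(p)\in E_{p}$, the affine span $\mathrm{Aff}_{p}$, and related linear data are tracked correctly under both operations; this follows from the explicit formulas in Theorem \ref{commutative}. With that in place, the corollary is immediate: the curvature loci of all four manifolds sit inside a single commutative diagram, and hence so do their second order geometries.
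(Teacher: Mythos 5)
Your proposal is correct and follows exactly the paper's (one-line) argument: the commutative diagram of Theorem \ref{commutative} relates the four curvature loci via normal sections and blow-ups, and since the curvature locus encodes all second order geometry, the four geometries are related. Your elaboration of the identifications of normal spaces and of the compatibility of the two routes from $M^3_{\reg}$ to $M^2_{\sing}$ is a more detailed writeup of the same reasoning, not a different route.
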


Theorem \ref{necessaryorbits} is an example of this fact. We study in what ways the geometry is related in the next subsection.

\section{Asymptotic direction of singular 3-manifolds in $\mathbb R^5$}\label{asymptotic}

Let $M$ be a corank 1 singular 3-manifold in $\mathbb R^5$, $p\in M$ and take $M$ as the image of a smooth map $g:\tilde{M}\rightarrow \mathbb{R}^{5}$, where $\tilde{M}$ is a regular 3-manifold and $q\in\tilde{M}$ is a corank $1$ point of $g$ such that $g(q)=p$.

\begin{definition}
A direction $u\in T_{q}\tilde{M}$ is called \emph{asymptotic} if there is a non zero vector $\nu\in N_pM$ such that
$$II_{\nu}(u,v)=\langle II(u,v),\nu\rangle=0\ \ \forall\ v\in T_{q}\tilde{M}.$$
Moreover, in such case, we say that $\nu$ is a \emph{binormal direction}.
\end{definition}

Following Theorem \ref{Teo-Dreibelbis}, in the regular case there are many ways of defining asymptotic directions and all of them are equivalent. We will prove a similar result for the singular case, but before proving this we need the following definition due to Dreibelbis (adapted for the singular case here).

\begin{definition}
Let $\{e_1,e_2,e_3\}$ be a basis for $T_q\tilde M$ and $\{n_1,n_2,n_3\}$ be a basis for $N_pM$. For any vector $u\in T_q\tilde M$, define $A(u)$ as the $3\times 3$ matrix with $A(u)_{ij}=II_{n_i}(e_j,u)=\langle II(e_j,u),n_i\rangle $.
\end{definition}

\begin{teo}\label{equiv}
Given $u\in T_q\tilde M$, the following are equivalent:
\begin{itemize}
\item[(1)] $u$ is an asymptotic direction.
\item[(2)] $\det A(u)=0$.
\item[(3)] There exists $\nu\in N_pM$ such that the height function $h_{\nu}$ has a degenerate singularity and $u\in\ker \Hess h_{\nu}$.
\end{itemize}
\end{teo}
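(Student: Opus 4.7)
The plan is to mirror Dreibelbis's argument (Theorem \ref{Teo-Dreibelbis}) in the singular corank 1 setting. The key conceptual observation is that $A(u)$ represents, in the chosen bases $\{e_j\}$ of $T_q\tilde M$ and $\{n_i\}$ of $N_pM$, the linear map $\Phi_u\colon T_q\tilde M\to N_pM$, $v\mapsto II(u,v)$; the $j$-th column of $A(u)$ records the coordinates of $II(e_j,u)$ in $\{n_1,n_2,n_3\}$. Throughout one may assume $u\neq 0$, since otherwise the three conditions are vacuous.

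For $(1)\Leftrightarrow(2)$, I would write $\nu=\sum_i\lambda_i n_i$ and note that by bilinearity the vanishing of $II_\nu(u,v)$ for every $v\in T_q\tilde M$ is equivalent to its vanishing on the basis $\{e_1,e_2,e_3\}$. For each $j$,
$$II_\nu(u,e_j)=\sum_{i=1}^3\lambda_i\langle II(e_j,u),n_i\rangle=\sum_{i=1}^3\lambda_i A(u)_{ij},$$
so the existence of a nonzero binormal $\nu$ is the existence of a nonzero row vector $\lambda=(\lambda_1,\lambda_2,\lambda_3)$ with $\lambda^{\top}A(u)=0$, which in turn is equivalent to $\det A(u)=0$.

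For $(1)\Leftrightarrow(3)$, I would fix $\nu\in N_pM$ and consider the height function $h_\nu\colon \tilde M\to\mathbb R$ defined by $h_\nu(q')=\langle g(q'),\nu\rangle$. Since $dh_\nu|_q(v)=\langle dg_q(v),\nu\rangle$ with $dg_q(v)\in T_pM$ orthogonal to $\nu$, the differential vanishes at $q$, so $q$ is automatically a critical point of $h_\nu$. Viewing the Hessian as a bilinear form on $T_q\tilde M$,
$$(\Hess h_\nu)(v,w)=\langle d^2g_q(v,w),\nu\rangle=\langle \pi_2(d^2g_q(v,w)),\nu\rangle=II_\nu(v,w),$$
where the middle equality uses $\nu\in N_pM$. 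Therefore a nonzero $u$ lies in $\ker\Hess h_\nu$ iff $II_\nu(u,v)=0$ for every $v$, which also forces the Hessian to be degenerate; this yields $(3)\Leftrightarrow(1)$.

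The argument presents no serious obstacle; the only subtlety is to ensure the identification $\Hess h_\nu = II_\nu$ holds on the nose at $q$, without any first-order correction term. This is guaranteed precisely by requiring $\nu\in N_pM$, so that $\langle d^2g_q(\cdot,\cdot),\nu\rangle$ coincides with $\langle \pi_2(d^2g_q(\cdot,\cdot)),\nu\rangle$ and the contribution from the tangential part of $d^2g_q$ drops out.
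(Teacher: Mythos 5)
Your proposal is correct and follows essentially the same route as the paper's proof: the paper obtains $(1)\Leftrightarrow(2)$ by writing out the linear system $II_\nu(u,e_j)=0$ in coordinates and demanding a nonzero solution $(\nu_1,\nu_2,\nu_3)$, which is exactly your condition $\lambda^{\top}A(u)=0$, and obtains $(1)\Leftrightarrow(3)$ by regrouping the same system to read off $u\in\ker\Hess h_{\nu}$, which is your identification $\Hess h_{\nu}=II_{\nu}$. Your presentation is merely more coordinate-free, and your remark that the tangential part of $d^2g_q$ drops out because $\nu\in N_pM$ is the correct justification for that identification.
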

\begin{proof}
Let $u=\alpha\partial_x+\beta\partial_y+\gamma\partial_z$, $v=\bar{\alpha}\partial_x+\bar{\beta}\partial_y+\bar{\gamma}\partial_z$ and $\nu=\nu_1n_1+\nu_2n_2+\nu_3n_3$.

$$\begin{array}{ll}
II(u,v) & =II(\alpha\partial_{x}+\beta\partial_{y}+\gamma\partial_z,\bar{\alpha}\partial_{x}+\bar{\beta}\partial_{y}+\bar{\gamma}\partial_z) \\
        & =\alpha\bar{\alpha}II(\partial_{x},\partial_{x})+(\alpha\bar{\beta}+\beta\bar{\alpha})II(\partial_{x},\partial_{y})+\beta\bar{\beta}II(\partial_{y},\partial_{y})+\gamma\bar{\gamma}II(\partial_{z},\partial_{z}) \\
        &\ +(\alpha\bar{\gamma}+\gamma\bar{\alpha})II(\partial_{x},\partial_{z})+(\beta\bar{\gamma}+\gamma\bar{\beta})II(\partial_{y},\partial_{z})
\end{array}
$$
and $II_{\nu}(u,v)=$
$$\begin{array}{l}
=\langle II(u,v),\nu_1n_1+\nu_2n_2+\nu_3n_3\rangle=\nu_1\langle II(u,v),n_{1}\rangle+\nu_2\langle II(u,v),n_{2}\rangle+\nu_3\langle II(u,v),n_{3}\rangle \\
              =\displaystyle\sum_{i=1}^{3}\nu_i[\alpha\bar{\alpha}l_{n_{i}}+(\alpha\bar{\beta}+\beta\bar{\alpha})m_{n_{i}}+\beta\bar{\beta}n_{n_{i}}+\gamma\bar{\gamma}p_{n_{i}}
        +(\alpha\bar{\gamma}+\gamma\bar{\alpha})q_{n_{i}}+(\beta\bar{\gamma}+\gamma\bar{\beta})r_{n_{i}}].
\end{array}
$$
Rewriting:
$$\begin{array}{c}
 =\bar{\alpha}[\displaystyle\sum_{i=1}^{3}\nu_i(\alpha l_{n_i}+\beta m_{n_i}+\gamma q_{n_i})]
  +\bar{\beta}[\displaystyle\sum_{i=1}^{3}\nu_i(\alpha m_{n_i}+\beta n_{n_i}+\gamma r_{n_i})] \\
  +\bar{\gamma}[\displaystyle\sum_{i=1}^{3}\nu_i(\alpha p_{n_i}+\beta q_{n_i}+\gamma r_{n_i})] .
\end{array}
$$
In order for $u\in T_{q}\tilde{M}$ to be an asymptotic direction, we must show that $II_{\nu}(u,v)=0$.
The last equality above must be satisfied for all $v=(\bar{\alpha},\bar{\beta},\bar{\gamma})\in T_{q}\tilde{M}$, so
$$\left\{\begin{array}{l}
\displaystyle\sum_{i=1}^{3}\nu_i(\alpha l_{n_i}+\beta m_{n_i}+\gamma q_{n_i})=0 \\
\displaystyle\sum_{i=1}^{3}\nu_i(\alpha m_{n_i}+\beta n_{n_i}+\gamma r_{n_i})=0 \\
\displaystyle\sum_{i=1}^{3}\nu_i(\alpha p_{n_i}+\beta q_{n_i}+\gamma r_{n_i})=0.
\end{array}\right.
$$
Since we want different solutions to $\nu_1=\nu_2=\nu_3=0$, we have
$$
\det\left(
      \begin{array}{ccc}
        \alpha l_{n_1}+\beta m_{n_1}+\gamma q_{n_1} & \alpha l_{n_2}+\beta m_{n_2}+\gamma q_{n_2} & \alpha l_{n_3}+\beta m_{n_3}+\gamma q_{n_3} \\
        \alpha m_{n_1}+\beta n_{n_1}+\gamma r_{n_1} & \alpha m_{n_2}+\beta n_{n_2}+\gamma r_{n_2} & \alpha m_{n_3}+\beta n_{n_3}+\gamma r_{n_3} \\
        \alpha p_{n_1}+\beta q_{n_1}+\gamma r_{n_1} & \alpha p_{n_2}+\beta q_{n_2}+\gamma r_{n_2} & \alpha p_{n_3}+\beta q_{n_3}+\gamma r_{n_3}
      \end{array}
    \right)=0,
$$
that is, $\det A(u)=0$. This proves the first equivalence.

For the second equivalence, we rewrite the above system of equations as
$$\left\{\begin{array}{l}
\alpha\displaystyle\sum_{i=1}^{3}\nu_i l_{n_i}+\beta\displaystyle\sum_{i=1}^{3}\nu_i m_{n_i}+\gamma\displaystyle\sum_{i=1}^{3}\nu_i q_{n_i}=0 \\
\alpha\displaystyle\sum_{i=1}^{3}\nu_i m_{n_i}+\beta\sum_{i=1}^{3}\nu_i n_{n_i}+\gamma\sum_{i=1}^{3}\nu_i r_{n_i}=0 \\
\alpha\displaystyle\sum_{i=1}^{3}\nu_i p_{n_i}+\beta\sum_{i=1}^{3}\nu_i q_{n_i}+\gamma\sum_{i=1}^{3}\nu_i r_{n_i}=0,
\end{array}\right.
$$
which means that $u\in \ker\Hess h_{\nu}$ and in order to get a different solution to $\alpha=\beta=\gamma=0$ we need $\det\Hess h_v=0$. This proves the third equivalence.
\end{proof}

\begin{teo}\label{asympregsing}
Let $M^3_{\sing}\subset \mathbb R^5$ be the projection under a tangent direction of $M^3_{\reg}\subset \mathbb R^6$ and $p'\in M^3_{\reg}$ the point which is projected to $p$. Then $u\in T_{p'}M^3_{\reg}$ is asymptotic if and only if $u\in T_q\tilde M$ is asymptotic. Moreover, the binormal directions are also the same.
\end{teo}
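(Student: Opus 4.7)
The plan is to work in Monge coordinates adapted to the direction of projection and then invoke the matrix characterisations of asymptoticity from Theorem \ref{Teo-Dreibelbis} (for the regular case) and Theorem \ref{equiv} (for the singular case), both of which amount to the vanishing of $\det A(u)$. Thus the strategy is to show that, under natural identifications, the matrix $A(u)$ is literally the same object on both sides.

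Place $M^3_{\reg}$ in Monge form $f(x,y,z)=(x,y,z,f_1,f_2,f_3)$, with the $f_i$ having vanishing $1$-jets at the origin, and assume without loss of generality that the direction of projection is $v=\partial_z$. Then $M^3_{\sing}$ is locally parametrised by $\bar f(x,y,z)=(x,y,f_1,f_2,f_3)$, which has a corank $1$ singularity at the origin. The tangent space $T_{p'}M^3_{\reg}$, with orthonormal basis $\{\partial_x,\partial_y,\partial_z\}$, is canonically identified with $T_q\tilde M$ via the common source $\mathbb R^3$, so a vector $u=\alpha\partial_x+\beta\partial_y+\gamma\partial_z$ represents the same element in both spaces. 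In these coordinates, $N_{p'}M^3_{\reg}$ is spanned by the last three coordinate axes of $\mathbb R^6$ and $N_pM^3_{\sing}$ by the last three coordinate axes of $\mathbb R^5$; the projection $\pi_v$ identifies them isometrically.

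Under these identifications, the second fundamental forms of $M^3_{\reg}$ and $M^3_{\sing}$ have the same coefficient matrix: in both settings the coefficients $l_{\nu_i},m_{\nu_i},n_{\nu_i},p_{\nu_i},q_{\nu_i},r_{\nu_i}$ are the second partial derivatives of $f_1,f_2,f_3$ at the origin, since projecting out the $z$-direction in the target does not alter the last three components. Therefore, for every $u$ the matrix $A(u)$ constructed before Theorem \ref{equiv} coincides with the analogous matrix for $M^3_{\reg}$, and $\det A(u)$ vanishes in one setting if and only if it vanishes in the other. Combining Theorem \ref{Teo-Dreibelbis} with Theorem \ref{equiv}, this gives the equivalence of asymptoticity.

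For the binormal statement, the condition that $\nu$ be binormal to $u$ unfolds, in the chosen orthonormal frame of the normal space, into a homogeneous linear system in $(\nu_1,\nu_2,\nu_3)$ whose coefficient matrix is precisely $A(u)$ (this is explicit in the derivation in the proof of Theorem \ref{equiv}). Since $A(u)$ agrees in both settings and the normal spaces are isometrically identified, the solution spaces coincide, so the binormal directions of $u$ are the same. The only point requiring real care is the clean identification of the two tangent spaces (both with the source $\mathbb R^3$) and of the two normal spaces (via $\pi_v$); once these are set up, the coincidence of the second fundamental forms is immediate and the rest is formal.
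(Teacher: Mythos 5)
Your proof is correct and rests on the same key observation as the paper's own (very short) proof: in Monge coordinates adapted to the direction of projection, the second fundamental form coefficients of $M^3_{\reg}$ and of its singular projection $M^3_{\sing}$ coincide, while the tangent spaces are identified through the common source and the normal spaces through $\pi_v$. The only difference is which item of the two equivalence theorems you invoke: the paper routes the argument through the height functions (item (iii) of Theorem \ref{Teo-Dreibelbis} together with item (3) of Theorem \ref{equiv}, noting that ``the height functions are the same''), whereas you use the determinant of $A(u)$. One caveat about your phrase that both characterisations ``amount to the vanishing of $\det A(u)$'': as printed, item (i) of Theorem \ref{Teo-Dreibelbis} reads $\det(A(u))\neq 0$, which taken literally would make your argument prove the opposite of the claim; this is evidently a misprint, since item (ii) of that theorem forces $\det A(u)=0$, so your reading is the mathematically correct one --- but be aware that the paper's choice of the height-function item sidesteps this discrepancy. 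Your treatment of the binormal directions, via the homogeneous linear system whose coefficient matrix is $A(u)$, is also correct and is in fact more explicit than the paper's one-line assertion.
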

\begin{proof}
The coefficients of the second fundamental form are the same for the regular and the singular case, so the height functions are the same. By Theorem \ref{Teo-Dreibelbis} and (3) in Theorem \ref{equiv} we get the equivalence.
\end{proof}

\begin{prop}\label{projnull}
Let $M^n_{\sing}\subset \mathbb R^{n+k-1}$ be the projection under a tangent direction $u$ of $M^n_{\reg}\subset \mathbb R^{n+k}$. The direction of projection $u$ becomes the null tangent direction of the singular projection.
\end{prop}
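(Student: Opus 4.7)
My plan is to factor the singular parametrisation as a linear projection composed with the original immersion, and then read off the kernel of its differential directly from a Monge-form computation.

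In detail, I would start with a local immersion $f\colon(\mathbb{R}^n,q)\to(\mathbb{R}^{n+k},p')$ of $M^n_{\reg}$, and use the fact that $df_q$ is a linear isomorphism onto $T_{p'}M^n_{\reg}$ to identify the projection direction $u$ with its unique preimage $w=(df_q)^{-1}(u)\in T_q\mathbb{R}^n$. After a linear change of coordinates in the source and an orthogonal change in the target, I may assume $q=0$, $w=\partial_{x_n}$, $u=e_n$, and that $f$ is in Monge form
$$
f(x_1,\ldots,x_n)=(x_1,\ldots,x_n,f_1(x),\ldots,f_k(x)),\qquad f_i\in\mathcal{M}_n^2.
$$
The map $g=\pi_u\circ f$ is then a local parametrisation of $M^n_{\sing}$ obtained from $f$ by deleting its $n$-th coordinate.

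The key observation is then routine: the $n$-th column of the Jacobian $dg_0$ has zeros in its first $n-1$ entries (because the projection removed the identity component in that slot) and zeros in its remaining $k$ entries (because each $f_i$ vanishes to second order at the origin). The remaining $n-1$ columns carry the identity block on the top, so they are linearly independent. Hence $\ker dg_0=\langle w\rangle$, which by definition means that $w$ is the null tangent direction of the corank-one parametrisation $g$ at $q$.

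There is no real obstacle here; the argument is essentially linear-algebraic and once the Monge-form reduction is in place the conclusion is immediate. The only subtle point that deserves emphasis is the identification implicit in the statement: the projection direction $u$ lives in $T_{p'}M^n_{\reg}\subset T_{p'}\mathbb{R}^{n+k}$, whereas the null tangent direction lives in $T_q\tilde{M}$, and the two are related through the isomorphism $df_q$. Making this identification explicit is what turns the statement from a sleight of hand into a concrete computation.
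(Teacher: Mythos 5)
Your proposal is correct, and the reduction to Monge form is sound: with $w=(df_q)^{-1}(u)=\partial_{x_n}$ and $f$ in Monge form, the projected parametrisation $g=\pi_u\circ f$ loses exactly the $n$-th coordinate, its Jacobian at the origin has vanishing $n$-th column, and so $\ker dg_q=\langle w\rangle$. The paper takes a different (though ultimately equivalent) route: it never fixes coordinates, but instead computes the coefficients $E^P_{ij}=E_{ij}-\langle f_i,u\rangle\langle f_j,u\rangle$ of the first fundamental form of the projection $P_u=f-\langle f,u\rangle u$ and shows directly that $I^P(u,u)=I(u,u)-\bigl(\sum_i a_i\langle f_i,u\rangle\bigr)^2=1-1=0$, i.e.\ that $u$ is null for the induced pseudo-metric. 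The two arguments are reconciled by the identity $I^P(v,v)=|dg_q(v)|^2$, which makes ``null for the pseudo-metric'' and ``in the kernel of $dg_q$'' the same condition; you should state this equivalence explicitly, since the paper's notion of null tangent direction is phrased in terms of the pseudo-metric. Your approach is the more elementary one (it amounts to the one-line observation $dg_q(w)=\pi_u(df_q(w))=\pi_u(u)=0$), at the cost of a coordinate normalisation; the paper's computation is coordinate-free and stays inside the fundamental-form formalism it uses everywhere else, which is why it records the formula for $E^P_{ij}$ along the way.
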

\begin{proof}
Let $f(x_1,\ldots,x_n)$ be the parametrisation of $M^n_{\reg}$ and denote by $f_i:=\frac{\partial f}{\partial x_i}$ and by $E_{ij}:=\langle f_i,f_j\rangle$ the coefficients of the first fundamental form. Consider a unit tangent vector $u=\sum_{i=1}^na_if_i\in T_pM^n_{\reg}$, then $I(u,u)=\sum_{i=1}^na_i^2E_{ii}+2\sum_{1\leq i<j\leq n}a_ia_jE_{ij}=1$. Consider the projection in the direction $u$, $P_u=f-\langle f, u\rangle u$. The coefficients of the first fundamental form for the singular projection are $E^P_{ii}=\langle P_{u_i}, P_{u_i}\rangle =\langle f_i-\langle f_i, u\rangle u,f_i-\langle f_i, u\rangle u\rangle=E_{ii}-(\langle f_i, u\rangle)^2$ and similarly $E^P_{ij}=E_{ij}-\langle f_i, u\rangle\langle f_j, u\rangle$. So the first fundamental form of the singular projection applied to $u$ is
$$
\begin{array}{ll}
    I^P(u,u) & =\displaystyle\sum_{i=1}^na_i^2E^P_{ii}+2\sum_{1\leq i<j\leq n}a_ia_jE^P_{ij} \\
     & = \displaystyle\sum_{i=1}^na_i^2(E_{ii}-(\langle f_i, u\rangle)^2)+2\sum_{1\leq i<j\leq n}a_ia_j(E_{ij}-\langle f_i, u\rangle\langle f_j, u\rangle) \\
     & = \displaystyle I(u,u)-(\sum_{i=1}^na_i^2(\langle f_i, u\rangle)^2+2\sum_{1\leq i<j\leq n}a_ia_j\langle f_i, u\rangle\langle f_j, u\rangle) \\
     & = \displaystyle I(u,u)-(\sum_{i=1}^na_i\langle f_i, u\rangle)^2.\\
\end{array}
$$
On the other hand $\langle f_i, u\rangle=\langle f_i, \sum_{j=1}^na_jf_j\rangle=\sum_{j=1}^na_jE_{ij}$, so the above equation is equal to $$=I(u,u)-(\sum_{i=1}^na_i\sum_{j=1}^na_jE_{ij})^2=1-(\sum_{i=1}^na_i^2E_{ii}+2\sum_{1\leq i<j\leq n}a_ia_jE_{ij})^2=1-1=0.$$ So $u$ is the null tangent direction in the pseudo-metric of the singular projection.
\end{proof}

\begin{coro}\label{nullasymp}
The direction of projection is asymptotic if and only if the null tangent direction is asymptotic.
\end{coro}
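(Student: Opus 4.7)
The plan is to deduce this corollary as an immediate consequence of combining Proposition \ref{projnull} and Theorem \ref{asympregsing}, which together provide exactly the two ingredients needed. The argument should be extremely short, since all the real work has already been done.

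First I would set up notation. Let $M^3_{\reg}\subset\mathbb{R}^6$ be parametrised near $p'$, let $u\in T_{p'}M^3_{\reg}$ be the tangent direction along which we project, and let $M^3_{\sing}\subset\mathbb{R}^5$ be the resulting singular projection, realised as the image of $g:\tilde M\to\mathbb{R}^5$ with $g(q)=p=\pi_u(p')$. Under the identification of $T_{p'}M^3_{\reg}$ with $T_q\tilde M$ coming from the parametrisation (the coefficients of the second fundamental form coincide, as used in Theorem \ref{asympregsing}), we may view $u$ simultaneously as a vector in $T_{p'}M^3_{\reg}$ and in $T_q\tilde M$.

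Next I would invoke the two preceding results. By Proposition \ref{projnull}, the vector $u\in T_q\tilde M$ is precisely the null tangent direction of $M^3_{\sing}$ at $p$, i.e.\ $I^{P}(u,u)=0$. By Theorem \ref{asympregsing}, $u$ is asymptotic as a direction in $T_{p'}M^3_{\reg}$ if and only if it is asymptotic as a direction in $T_q\tilde M$. Chaining these two equivalences gives: the direction of projection $u$ is asymptotic for $M^3_{\reg}$ if and only if the null tangent direction of $M^3_{\sing}$ is asymptotic, which is exactly the statement of the corollary.

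I do not expect any serious obstacle here; the only point requiring a touch of care is making sure the reader sees that the ``direction of projection'' on the regular side and the ``null tangent direction'' on the singular side are literally the same vector after the natural identification $T_{p'}M^3_{\reg}\cong T_q\tilde M$. Once that identification is spelled out, the corollary is a one-line consequence, so the write-up is essentially a declarative combination of the two previous statements rather than a computation.
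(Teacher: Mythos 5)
Your proposal is correct and follows exactly the paper's own argument: the paper also proves this corollary by directly combining Proposition \ref{projnull} (the projection direction is the null tangent direction) with Theorem \ref{asympregsing} (asymptoticity is preserved under projection). Your extra care in spelling out the identification $T_{p'}M^3_{\reg}\cong T_q\tilde M$ is a reasonable elaboration of what the paper leaves implicit.
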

\begin{proof}
Follows directly from Theorem \ref{asympregsing} and Proposition \ref{projnull}.
\end{proof}

\begin{definition}
When the null tangent direction $u\in T_pM^n_{\sing}$ is asymptotic we call it \emph{infinite asymptotic direction} and denote it by $u_{\infty}$.
\end{definition}

For $M^2_{\sing}\subset \mathbb R^3$ and $M^2_{\sing}\subset \mathbb R^4$ the null tangent direction $u$ is asymptotic only when the curvature parabola $\Delta_p$ is degenerate (see \cite{MartinsBallesteros} for $\mathbb R^3$ and \cite{Benedini/Sinha/Ruas} for $\mathbb R^4$). Since the curvature parabola $\Delta_p$ is the image by $\eta=II$ of $C_q$ and must contain all the second order geometry, the image $\eta(u_{\infty})$ and tangent space to $\Delta_p$ at $\eta(u_{\infty})$ must be defined. For $M^3_{\sing}\subset \mathbb R^5$ we also need to define this image and tangent space, however, not only for the degenerate case.

The idea of an infinite asymptotic direction is as follows. Any singular manifold can be seen as the projection of a regular manifold along a tangent direction. By Theorem \ref{asympregsing} the number of asymptotic directions in the regular and in the singular case is the same. When we project the regular manifold in an asymptotic direction, we force that asymptotic direction to become an infinite asymptotic direction of the singular projection. In fact, by Corollary \ref{nullasymp} it is the null tangent direction.

In the case of $M^2_{\reg}\subset \mathbb R^4$ projected to $M^2_{\sing}\subset \mathbb R^3$ the following are equivalent:
\begin{itemize}
\item[(i)] The direction of projection is an asymptotic direction.
\item[(ii)] $M^2_{\sing}\subset \mathbb R^3$ has a singularity worse than a cross-cap.
\item[(iii)] The curvature parabola $\Delta_p$ of $M^2_{\sing}\subset \mathbb R^3$ is degenerate.
\end{itemize}

The equivalence between (i) and (ii) can be found in \cite{mondthesis, BruceNogueira}, the equivalence between (ii) and (iii) is shown in \cite{MartinsBallesteros}. For $M^2_{\reg}\subset \mathbb R^5$ projected to $M^2_{\sing}\subset \mathbb R^4$ we have the same situation changing the cross-cap for the $I_1$-singularity (see \cite{Fuster/Ruas/Tari} and \cite{Benedini/Sinha/Ruas}). By Corollary \ref{nullasymp} adapted to these dimensions (the proof is the same) the direction $u\in T_q\tilde M$ is an asymptotic direction and in fact is the infinite asymptotic direction. This is why an image by $\eta$ of this direction is only defined in the case that $\Delta_p$ is degenerate.

For $M^3_{\reg}\subset \mathbb R^6$ projected to $M^3_{\sing}\subset \mathbb R^5$ we can prove the analogous result of the equivalence between (i) and (ii) as follows. In \cite{BenediniRuasSacramento} it is shown that, given a normal form $f(x,y,z)$ with the 2-jet $j^2f(0)$ of type $$(x,y,a_{20}x^2+a_{11}xy+a_{02}y^2+a_{21}z^2+a_{22}xz+a_{12}yz,b_{20}x^2+b_{11}xy+b_{02}y^2+$$$$b_{21}z^2+b_{22}xz+b_{12}yz,c_{20}x^2+c_{11}xy+c_{02}y^2+c_{21}z^2+c_{22}xz+c_{12}yz),$$ then it is $\mathscr A^2$-equivalent to one of the following orbits: $$(x,y,xz,yz,z^2),(x,y,z^2,xz,0),(x,y,xz,yz,0),(x,y,z^2,0,0),(x,y,xz,0,0),(x,y,0,0,0).$$ Furthermore, they show that it is in the best $\mathscr A^2$-orbit $(x,y,xz,yz,z^2)$ if and only if $\det(\alpha)\neq 0$ where $$\alpha=\left(
                                      \begin{array}{ccc}
                                        a_{21} & a_{22} & a_{12} \\
                                        b_{21} & b_{22} & b_{12} \\
                                        c_{21} & c_{22} & c_{12} \\
                                      \end{array}
                                    \right)
$$
For simplicity we take Monge forms and the direction of projection $u\in T_{p'}M^3_{\reg}$ to be $(0,0,1)$. Notice that in this setting $u$ is the null tangent direction in $T_q\tilde M$.

\begin{prop}
The direction $u=(0,0,1)\in T_pM^3_{\reg}$ is an asymptotic direction if and only if $j^2P_u(0)$ is not in the orbit $(x,y,xz,yz,z^2)$, where $P_u$ stands for the projection of $M^3_{\reg}$ in the direction $u$.
\end{prop}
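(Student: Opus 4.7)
The plan is to write down explicitly the matrix $A(u)$ for $u=(0,0,1)$ in Monge coordinates, compare it with the $3\times 3$ matrix $\alpha$ from \cite{BenediniRuasSacramento} that controls the $\mathscr A^2$-orbit of $j^2P_u(0)$, and observe that their determinants differ only by a nonzero constant.

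First I put $M^3_{\reg}$ in Monge form at $p'$, namely $f(x,y,z)=(x,y,z,f_1,f_2,f_3)$ with each $f_i\in\mathcal M_3^2$, and expand
$$f_i=a^i_{20}x^2+a^i_{11}xy+a^i_{02}y^2+a^i_{21}z^2+a^i_{22}xz+a^i_{12}yz+O(3).$$
Then $j^2P_u(0)=(x,y,j^2f_1,j^2f_2,j^2f_3)$ has precisely the shape of the generic $2$-jet treated in \cite{BenediniRuasSacramento}, and, as recalled just before the statement, it is $\mathscr A^2$-equivalent to $(x,y,xz,yz,z^2)$ if and only if $\det\alpha\neq 0$, where $\alpha$ is the $3\times 3$ matrix whose $i$-th row is $(a^i_{21},a^i_{22},a^i_{12})$.

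Next I take the orthonormal frame $\{n_1,n_2,n_3\}$ of $N_{p'}M^3_{\reg}$ given by the last three coordinate axes of $\mathbb R^6$ and the tangent basis $\{e_1,e_2,e_3\}=\{\partial_x,\partial_y,\partial_z\}$. For $u=e_3$ the entries of $A(u)$ are $A(u)_{ij}=II_{n_i}(e_j,e_3)$, so in Monge form the three columns of $A(u)$ are read off as
$$\langle f_{xz}(0),n_i\rangle_i=(a^i_{22})_i,\quad\langle f_{yz}(0),n_i\rangle_i=(a^i_{12})_i,\quad\langle f_{zz}(0),n_i\rangle_i=(2a^i_{21})_i.$$
These are the columns of $\alpha$ after one cyclic shift (a $3$-cycle, hence an even permutation) with the resulting last column rescaled by $2$. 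Consequently $\det A(u)=2\det\alpha$.

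Finally I apply Theorem \ref{Teo-Dreibelbis}: $u$ is asymptotic for $M^3_{\reg}$ precisely when there exists $\nu\in N_{p'}M^3_{\reg}$ with $II_\nu(u,\cdot)\equiv 0$, which is the statement that the left nullspace of $A(u)$ is nontrivial, i.e.\ $\det A(u)=0$. Combining with $\det A(u)=2\det\alpha$ and the classification criterion yields
$$u\ \text{asymptotic}\iff\det\alpha=0\iff j^2P_u(0)\not\sim_{\mathscr A^2}(x,y,xz,yz,z^2),$$
which is the claim. The only delicate point is the bookkeeping of indices and factors of $2$ when passing from Monge coefficients to second-fundamental-form coefficients, but with Monge form this is essentially mechanical, so I do not expect any real obstacle.
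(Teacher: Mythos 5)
Your proposal is correct and follows essentially the same route as the paper: both arguments reduce the asymptotic condition to the vanishing of the determinant of the $3\times 3$ matrix of $z$-involving second-order Monge coefficients and identify that determinant, up to an even column permutation and a harmless factor of $2$, with $\det\alpha$, which controls membership in the orbit $(x,y,xz,yz,z^2)$. The only cosmetic difference is that the paper invokes item (iii) of Theorem \ref{Teo-Dreibelbis} (a nonzero $\nu$ with $u\in\ker \Hess h_{\nu}$) whereas you use the $A(u)$/left-nullspace formulation of items (i)--(ii) (note the ``$\neq 0$'' in the paper's statement of (i) is evidently a typo for ``$=0$'', and your reading via (ii) is the correct one); these are transposes of the same linear system, so the computation is identical.
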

\begin{proof}
Consider $M^3_{\reg}\subset \mathbb R^6$ given in Monge form $$f(x,y,z)=(x,y,z,f_1(x,y,x),f_2(x,y,x),f_3(x,y,x))$$ with 2-jet as above, 
then $P_v(x,y,z)=(x,y,f_1(x,y,x),f_2(x,y,x),f_3(x,y,x))$ is in the orbit $(x,y,xz,yz,z^2)$ if and only if $\det(\alpha)\neq 0$.

On the other hand, $u$ is asymptotic if there exists a non zero $\nu=(\nu_1,\nu_2,\nu_3)\in N_pM^3_{\reg}$ such that $u\in \ker \Hess h_{\nu}$, where $h_{\nu}(x,y,z)=\langle f(x,y,z),\nu\rangle=f_1\nu_1+f_2\nu_2+f_3\nu_3$. We have $$\Hess h_{\nu}\left(\begin{array}{c}
                                        0 \\
                                        0 \\
                                        1 \\
                                      \end{array}\right)=\left(\begin{array}{c}
                                        a_{22}\nu_1+b_{22}\nu_2+c_{22}\nu_3 \\
                                        a_{12}\nu_1+b_{12}\nu_2+c_{12}\nu_3 \\
                                        2(a_{21}\nu_1+b_{21}\nu_2+c_{21}\nu_3) \\
                                      \end{array}\right)=\left(\begin{array}{c}
                                        0 \\
                                        0 \\
                                        0 \\
                                      \end{array}\right)$$
if and only if $$\det\left(\begin{array}{ccc}
                                        a_{22} & b_{22} & c_{22} \\
                                        a_{12} & b_{12} & c_{12} \\
                                        a_{21} & b_{21} & c_{21} \\
                                      \end{array}\right)=\det(\alpha)=0.$$
\end{proof}

However, equivalence between (ii) and (iii) for 3-manifolds is not true in general (see \cite{BenediniRuasSacramento} for a partial result):

\begin{ex}
Consider $M^3_{\reg}\subset \mathbb R^6$ given by $(x,y,z,x^2+z^2,xy+xz,y^2)$. $u=(0,0,1)\in \ker \Hess h_{\nu}$ for the binormal direction $\nu=(0,0,1)$, so $u$ is an asymptotic direction. Projection along $u$ yields $(x,y,x^2+z^2,xy+xz,y^2)$ which is $\mathscr A^2$-equivalent to $(x,y,z^2,xz,0)$, which is not the best $\mathscr A^2$-orbit. However, the curvature locus is given by $$(\cos(\theta)^2+\frac{\cos(\phi)^2}{\sin(\phi)^2},\cos(\theta)\sin(\theta)+\cos(\theta)\frac{\cos(\phi)}{\sin(\phi)},\sin(\theta)^2)$$ which is not contained in a plane, i.e. it is not a degenerate curvature locus.
\end{ex}

The previous example shows that in some cases a non-degenerate curvature locus has an infinite asymptotic direction, so we must define the image and tangent space of $u_{\infty}$ for some $M^3_{\sing}\subset \mathbb R^5$ with non-degenerate curvature locus.

\begin{definition}
Let $\eta(\theta,\phi)$ denote the parametrisation of the curvature locus $\Delta_{cv}$ of $M^3_{\sing}$. For each topological type of the curvature locus we must define $\eta(u_{\infty})$:
\begin{itemize}
\item[(i)] If $\Delta_{cv}$ is a point $r$, then $\eta(u_{\infty})=r$ and $\frac{\partial\eta}{\partial \theta}(u_{\infty})=\frac{\partial\eta}{\partial \phi}(u_{\infty})=0$.
\item[(ii)] If $\Delta_{cv}$ is a line or a half line, then $\eta(u_{\infty})=\frac{\partial\eta}{\partial \theta}(u_{\infty})=\frac{\partial\eta}{\partial \phi}(u_{\infty})=\frac{\eta'(t)}{|\eta'(t)|}$ for any $t$ such that $\eta'(t)\neq 0$, where $t$ is the parameter of the line.
\item[(iii)] If $\Delta_{cv}$ is a planar region or a plane, then $\eta(u_{\infty})=\frac{\partial\eta}{\partial \theta}(u_{\infty})=\frac{\partial\eta}{\partial \theta}(v)$ for any $v$ such that $\eta(v)$ does not lie in the boundary of $\Delta_{cv}$ and $\frac{\partial\eta}{\partial \phi}(u_{\infty})=(\frac{\partial\eta}{\partial \theta}(v))^{\perp}$.
\item[(iv)] If $\Delta_{cv}$ is non-degenerate such that $u_{\infty}$ is an asymptotic direction of $M^3_{\reg}$, then $\eta(u_{\infty})=\frac{\partial\eta}{\partial \theta}(u_{\infty})=\frac{\partial\eta}{\partial \phi}(u_{\infty})=\lim_{\phi\rightarrow 0}\frac{\eta(\theta,\phi)}{|\eta(\theta,\phi)|}$.
\end{itemize}
\end{definition}

\begin{teo}\label{tangent}
Let $u\in C_q\cup \{u_{\infty}\}$, $u$ is asymptotic if and only if
\begin{itemize}
\item[(4)] The vector $\eta(u)$ is tangent to $\eta(C_q\cup\{u_{\infty}\})$ at $\eta(u)$ or $\Delta_{cv}=\eta(C_q)$ is singular at $u$.
\end{itemize}
\end{teo}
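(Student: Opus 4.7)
The approach is to mirror the blueprint of Theorem \ref{Teo-Dreibelbis}(iv), now in the singular cylinder setting, by exploiting the characterisation of asymptotic directions via $\det A(u)=0$ from Theorem \ref{equiv}. The key observation is that this condition is exactly the statement that the linear map $L(u):=II(u,\cdot)\colon T_q\tilde M\to N_pM^3_{\sing}$ has image $L_u$ of dimension at most $2$.

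For $u\in C_q$ I would first compute the differential of $\eta$: since $\eta(v)=II(v,v)$ is quadratic, $d\eta_u(w)=2\,II(u,w)$ for every tangent $w$. Writing $u=(x,y,z)$ with $x^2+y^2=1$, the tangent plane $T_uC_q$ is spanned by $(-y,x,0)$ and $\partial z$, so
\[
T_{\eta(u)}\Delta_{cv}=\bigl\langle II(u,(-y,x,0)),\,II(u,\partial z)\bigr\rangle=L(u)(T_uC_q),
\]
and $\Delta_{cv}$ is singular at $u$ precisely when this image has dimension less than $2$. Moreover, $\eta(u)=II(u,u)\in L_u$ automatically, while the position vector $u$ itself does \emph{not} lie in $T_uC_q$ (one has $\langle u,(x,y,0)\rangle=x^2+y^2=1\neq 0$).

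With these observations the theorem for $u\in C_q$ follows from a case split on $\dim L_u$. If $\dim L_u=3$ then $L(u)$ is injective, so the restriction $L(u)|_{T_uC_q}$ still has rank $2$ (no singularity), and $\eta(u)\in L(u)(T_uC_q)$ would force $u\in T_uC_q$, a contradiction; hence neither alternative in (4) holds and, by Theorem \ref{equiv}, $u$ is not asymptotic. If $\dim L_u\le 2$ then $u$ \emph{is} asymptotic and, depending on whether the restriction $L(u)|_{T_uC_q}$ has rank $<2$ or $=2$, either $\Delta_{cv}$ is singular at $u$, or $T_{\eta(u)}\Delta_{cv}=L_u\ni\eta(u)$, so (4) holds.

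The case $u=u_\infty$ is the main obstacle, since the ``tangent data'' at $u_\infty$ is defined piecewise across the topological types of $\Delta_{cv}$ by the preceding definition. I would go through the listed cases: when $\Delta_{cv}$ is a point, a line or half-line, or a planar region, both the asymptotic condition $\det A(\partial z)=0$ and condition (4) hold automatically, the former by direct inspection of $A(\partial z)$ (whose columns are the triples $(q_{\nu_i}),(r_{\nu_i}),(p_{\nu_i})$) and the latter because $\eta(u_\infty)$ and its formal derivatives are set equal to vectors lying in the affine span of the degenerate locus. In the remaining non-degenerate case, the prescription $\eta(u_\infty)=\lim_{\phi\to 0}\eta(\theta,\phi)/|\eta(\theta,\phi)|$ makes tangency equivalent to the vanishing of $\det A(\partial z)$ by a direct computation in cylindrical coordinates, which closes the equivalence.
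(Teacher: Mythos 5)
Your proof is correct and follows essentially the same route as the paper's: both compute $d\eta_u(w)=2\,II(u,w)$, identify tangency or singularity of $\Delta_{cv}$ at $u$ with a rank drop of the linear map $II(u,\cdot)$ (you via $\det A(u)=0$ and Theorem \ref{equiv}, the paper via the existence of $w$ with $II(u,w)=0$ and the image lying in a plane), and dispatch $u_{\infty}$ by the construction of $\eta(u_{\infty})$ and its formal derivatives. Your case split on $\dim L_u$ and the explicit check that $u\notin T_uC_q$ are just a more detailed rendering of the paper's appeal to the linear independence of $\{u,u_\theta,u_\phi\}$, and your case-by-case treatment of $u_\infty$ fleshes out what the paper compresses into ``the tangency occurs by construction.''
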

\begin{proof}
Suppose first that $u$ is not the null tangent direction. In this case we proceed first as in \cite{dreibelbis}. Let $C_q$ be parameterised by $(\theta,\phi)$ in cylindrical coordinates and let $u=u(\theta,\phi)$. Then $\frac{\partial\eta(u)}{\partial \theta}=II(u,u)_{\theta}=2II(u,u_{\theta})$ and $\frac{\partial\eta(u)}{\partial \phi}=II(u,u)_{\phi}=2II(u,u_{\phi})$. Since $\{u,u_{\theta},u_{\phi}\}$ are linearly independent, having the tangency or a singularity means that $\{II(u,u),II(u,u)_{\theta},II(u,u)_{\phi}\}=\{II(u,u),2II(u,u_{\theta}),2II(u,u_{\phi})\}$ is linearly dependent, and this happens if and only if there exists a unit tangent vector $w\in C_q$ such that $II(u,w)=0$. This is equivalent to the fact that there exists $w$ such that $II_{\nu}(u,w)=0$ for all $\nu\in N_pM$. Considering now $g=II_{\nu}(u,\cdot):T_q\tilde M\rightarrow N_pM$, since $w\in \ker g$, the image of $g$ is contained in a plane in $N_pM$ and so, what we have is equivalent to the fact that there exists $\nu\in N_pM$ such that $II_{\nu}(u,v)=0$ for all $v\in T_q\tilde M$, i.e. $u$ is asymptotic.

For $u_{\infty}$ the tangency occurs by construction of $\frac{\partial\eta}{\partial \theta}(u_{\infty})$ and $\frac{\partial\eta}{\partial \phi}(u_{\infty})$.
\end{proof}

\begin{teo}\label{asympnormalsection}
Let $M^2_{\sing}\subset \mathbb R^4$ be the normal section given by $\{Y=0\}$ of $M^3_{\sing}$, $p\in M^3_{\sing}$, and suppose that $Aff_{i_2^{-1}(p)}=E_{i_2^{-1}(p)}$, then $u\in T_{q}\tilde{M}$ is an asymptotic direction of $M^3_{\sing}$ if and only if $(i_{2_*})^{-1}(u)$ is an asymptotic direction of $M^2_{\sing}$.
\end{teo}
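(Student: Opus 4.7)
The plan is to use the identification of normal spaces given by $i_2$ to match the second fundamental forms on the normal section, then exploit the hypothesis $Aff_{i_2^{-1}(p)}=E_{i_2^{-1}(p)}$ to reduce the asymptotic-direction condition on $M^3_{\sing}$ to that on $M^2_{\sing}$.

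First I would take $f:(\mathbb{R}^3,0)\to(\mathbb{R}^5,0)$ a Monge-form parametrisation of $M^3_{\sing}$ so that the normal section $\{Y=0\}$ corresponds to $y=0$ in the source, yielding $\tilde f(x,z)=(x,f_1(x,0,z),f_2(x,0,z),f_3(x,0,z))$ as the parametrisation of $M^2_{\sing}\subset\mathbb{R}^4$. The immersion $i_2$ induces a canonical isometric identification $N_{i_2^{-1}(p)}M^2_{\sing}\cong N_pM^3_{\sing}$, under which $II^{M^3}(v,w)=II^{M^2}(v',w')$ for all $v,w\in\langle\partial_x,\partial_z\rangle$; in particular the coefficients $l=II(\partial_x,\partial_x)$, $q=II(\partial_x,\partial_z)$, $p=II(\partial_z,\partial_z)$ coincide for both manifolds. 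The hypothesis $Aff=E$ then forces the curvature parabola $\eta(t)=l+2tq+t^2p$ to be non-degenerate with affine hull passing through the origin, so $l,q,p\in E_{i_2^{-1}(p)}$ and $E_{i_2^{-1}(p)}=\langle q,p\rangle$.

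I would then choose an orthonormal basis $\{n_1,n_2,n_3\}$ of $N_pM^3_{\sing}$ with $\langle n_1,n_2\rangle=E_{i_2^{-1}(p)}$ and $n_3\perp E_{i_2^{-1}(p)}$; the previous step gives $l_{n_3}=q_{n_3}=p_{n_3}=0$. For $u=\alpha\partial_x+\gamma\partial_z$ with $u'=(i_{2_*})^{-1}(u)$, the matrix $A^{M^3}(u)$ from Theorem~\ref{equiv} acquires the third row $(0,\alpha m_{n_3}+\gamma r_{n_3},0)$, and cofactor expansion along this row yields the factorisation
\[
\det A^{M^3}(u)=\pm(\alpha m_{n_3}+\gamma r_{n_3})\,\det A^{M^2}(u'),
\]
where $A^{M^2}(u')$ is the $2\times 2$ matrix whose vanishing characterises $u'$ as an asymptotic direction of $M^2_{\sing}$. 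The implication ``$u'$ asymptotic $\Rightarrow u$ asymptotic'' is then immediate from this identity.

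For the converse I would argue through the binormal decomposition: given any binormal $\nu\in N_pM^3_{\sing}$ for $u$, write $\nu=\nu_E+\nu_F\in E\oplus E^\perp$; since $l,q,p\in E$, the equations $II^{M^3}_\nu(u,\partial_x)=II^{M^3}_\nu(u,\partial_z)=0$ collapse to $II^{M^2}_{\nu_E}(u',\partial_x)=II^{M^2}_{\nu_E}(u',\partial_z)=0$, so $\nu_E$ serves as a binormal for $u'$ in $M^2_{\sing}$ whenever $\nu_E\neq 0$. The main obstacle, and what I expect to be the technical heart of the proof, is the subcase where the entire space of binormals of $u$ in $M^3_{\sing}$ sits inside $E^\perp$: then only the equation $\alpha m_{n_3}+\gamma r_{n_3}=0$ survives, and one needs to combine the factorisation above with the hypothesis $Aff=E$ (and possibly with the geometric characterisation of asymptotic directions in Theorem~\ref{tangent} via tangencies to the curvature locus) to conclude that $\det A^{M^2}(u')=0$ also holds, thereby producing the required binormal in $E$ for $u'$ and completing the equivalence.
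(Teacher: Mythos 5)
Your route is genuinely different from the paper's: the paper argues geometrically, invoking Theorem \ref{tangent} together with its surface analogue (Lemma 4.10 of \cite{Benedini/Sinha/Ruas}) and identifying the curvature parabola of the section with the planar slice $\Delta_{cv}\cap E_{i_2^{-1}(p)}$, so that asymptoticity becomes a statement about the position vector $\eta(u)$ being tangent to the surface versus tangent to the curve. Your linear-algebra set-up is correct and, for the direction it covers, more precise than the paper's: the hypothesis $Aff_{i_2^{-1}(p)}=E_{i_2^{-1}(p)}$ does force a non-degenerate parabola with $l,q,p\in E_{i_2^{-1}(p)}$, the adapted frame gives the third row $(0,\alpha m_{n_3}+\gamma r_{n_3},0)$ of $A^{M^3}(u)$, and the factorisation $\det A^{M^3}(u)=\pm(\alpha m_{n_3}+\gamma r_{n_3})\det A^{M^2}(u')$ together with Theorem \ref{equiv}(2) cleanly yields ``$(i_{2_*})^{-1}(u)$ asymptotic $\Rightarrow u$ asymptotic''.

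The subcase you flag in the converse, however, is not a technical nuisance to be absorbed by the hypothesis $Aff=E$: it is a genuine obstruction, and the implication fails there. Take $f(x,y,z)=(x,y,xz,z^2,xy+yz)$ and $u=\partial_x-\partial_z$. Then $l_{n_3}=q_{n_3}=p_{n_3}=0$ and $m_{n_3}=r_{n_3}=1$, so $\alpha m_{n_3}+\gamma r_{n_3}=0$ and $II_{n_3}(u,v)=0$ for all $v$; hence $u$ is asymptotic for $M^3_{\sing}$ with binormal $n_3$ (equivalently, $\Delta_{cv}$ is singular at $u$). The normal section $\{Y=0\}$ is $(x,z)\mapsto(x,xz,z^2,0)$, whose curvature parabola $\eta(t)=(2t,2t^2,0)$ is non-degenerate with $Aff_{i_2^{-1}(p)}=E_{i_2^{-1}(p)}=\{T=0\}$, so the theorem's hypothesis holds; yet $\det A^{M^2}(u')=\det\bigl(\begin{smallmatrix}-1&1\\0&-2\end{smallmatrix}\bigr)=2\neq0$, and the only degenerate normal direction for $u'$ is $n_3\notin E_{i_2^{-1}(p)}$, so $u'$ is not asymptotic for $M^2_{\sing}$. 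Consequently no combination of your factorisation with $Aff=E$ (nor with Theorem \ref{tangent}) can produce the missing binormal in $E$; the equivalence requires an additional hypothesis ruling out binormals of $u$ orthogonal to $E_{i_2^{-1}(p)}$ --- equivalently, that $\Delta_{cv}$ is smooth at $u$ with tangent plane transverse to $E_{i_2^{-1}(p)}$, which is precisely what the paper's tangency argument uses implicitly when it replaces ``tangent to $\Delta_{cv}$'' by ``tangent to $\Delta_{cv}\cap E_{i_2^{-1}(p)}$''. In short: your diagnosis of where the difficulty lies is exactly right, but the hoped-for resolution of that subcase does not exist, so the proposal (like the paper's own argument) does not establish the stated equivalence in full.
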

\begin{proof}
Lemma 4.10 in \cite{Benedini/Sinha/Ruas} is the equivalent result to Theorem \ref{tangent} for singular surfaces in $\mathbb R^4$. Taking a normal section is taking a hyperplane $U$ in $T_q\tilde M$. This induces an intersection of $\Delta_{cv}$ with the plane $II(U)$. A direction $u\in T_q\tilde M$ is asymptotic if $\eta(u)$ is tangent to $\eta(C_q\cup\{u_{\infty}\})$ or $\eta(C_q)$ is singular at $u$. The curvature parabola of $M^2_{\sing}\subset \mathbb R^4$ is given by $\Delta_{cv}\cap II(U)$ and here $\eta(u)$ is tangent to $\Delta_{cv}\cap II(U)$ if and only if $II(U)$ is a plane that passes through the origin, i.e. $Aff_{i_2^{-1}(p)}=E_{i_2^{-1}(p)}$. Hence, $\eta_{|_{U\cap C_q}}((i_{2_*})^{-1}(u))\in E_{i_2^{-1}(p)}$ and is also parallel to $\eta_{|_{U\cap C_q}}'((i_{2_*})^{-1}(u))$, that is, $(i_{2_*})^{-1}(u)$ is an asymptotic direction of the normal section.
\end{proof}

\begin{ex}
Consider the singular 3-manifold given by $(x,y,x^2-2yz,y^2-2xz,z^2-2xy)$. Its curvature locus is given by $$(2\cos(\theta)^2 - 4\sin(\theta) \frac{\cos(\phi)}{\sin(\phi)},
2 \sin(\theta)^2 - 4\cos(\theta) \frac{\cos(\phi)}{\sin(\phi)},
2 \frac{\cos(\phi)^2}{\sin(\phi)^2} - 4\cos(\theta) \sin(\theta)).$$ We have that $(0,1,-1)$ is an asymptotic direction associated to the binormal direction $(-1,1,1)$. Consider now the normal section given by $\{X=0\}$ and parameterised by $(y,-2yz,y^2,z^2)$. The curvature parabola is given by $(-4y,2,2y^2)$. Here $(-1,1,1)$ is a degenerate direction, but it is not binormal since it is not in $E_p$, and therefore, $(1,-1)$ is not an asymptotic direction of the singular surface.
\end{ex}

\begin{rem}
A similar result to Theorem \ref{asympnormalsection} for the regular case is not clear. The definition of asymptotic directions in $M^2_{\reg}\subset \mathbb R^{5}$ is slightly different from the rest of definitions. Namely, it depends on higher order singularities of the height function and therefore this is not second order geometry. The relation of these asymptotic directions with the asymptotic directions of $M^3_{\reg}\subset \mathbb R^{6}$ or of $M^2_{\sing}\subset \mathbb R^{4}$ is left for future work.
\end{rem}

\begin{rem}
Theorems \ref{equiv}, \ref{asympregsing} and \ref{tangent} can be generalised to other dimensions. The setting in this section works well for $M^n_{\reg}\subset \mathbb R^{2n}$ projected to $M^n_{\sing}\subset \mathbb R^{2n-1}$ and taking normal sections of the singular manifold to get $M^{n-1}_{\sing}\subset \mathbb R^{2(n-1)}$.
\end{rem}

\end{document}